\definecolor{mypurple}{rgb}{0.627, 0.125, 0.941}
\definecolor{myblue}{rgb}{0, 0, 1}
\newtheorem{obs}{Observation}
\newtheorem{open}{Open Problem}
\newenvironment{sketch}{\medskip\noindent\emph{Proof (sketch).}}{\mbox{}\hfill$\qed$\par\medskip}
\newcommand{\df}[1]{{\it #1}}
\newcommand{\Oh}{{\ensuremath{\mathcal{O}}}}
\newcommand{\Sh}{{\ensuremath{\mathcal{S}}}}
\newcommand{\tw}[1]{\ensuremath{twist(#1)}}
\newcommand{\arr}{\ensuremath{\!\rightarrow\!}}
\newcounter{dummycount}
\newcommand{\wormholeThm}[1]{
	\newcounter{#1}
	\setcounter{#1}{\value{theorem}}}
\newenvironment{backInTimeThm}[1]{
	\setcounter{dummycount}{\value{theorem}}
	\setcounter{theorem}{\value{#1}}}
{\setcounter{theorem}{\value{dummycount}}}
\newcommand{\wormholeLm}[1]{
	\newcounter{#1}
	\setcounter{#1}{\value{lemma}}}
\newenvironment{backInTimeLm}[1]{
	\setcounter{dummycount}{\value{lemma}}
	\setcounter{lemma}{\value{#1}}}
{\setcounter{lemma}{\value{dummycount}}}
\renewcommand{\orcidID}[1]{\href{https://orcid.org/#1}{\includegraphics[scale=.03]{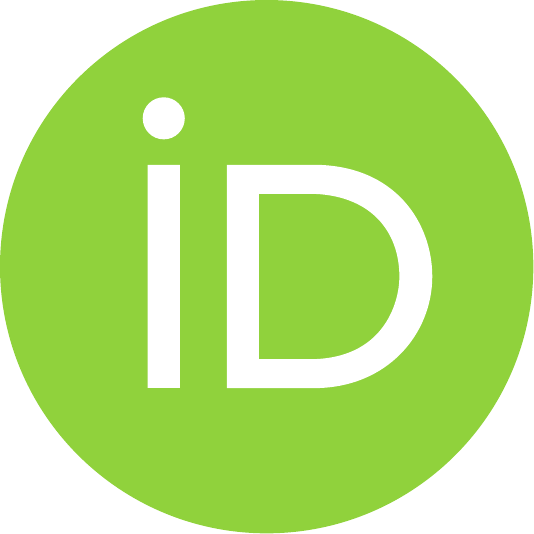}}} 
\begin{document}
	
\title{On Families of Planar DAGs with\\ Constant Stack Number}

\author{
	Martin~N{\"o}llenburg\inst{1}\orcidID{0000-0003-0454-3937} \and
	Sergey~Pupyrev\inst{2}\orcidID{0000-0003-4089-673X}}

\authorrunning{M. N{\"o}llenburg and S. Pupyrev}
\institute{
	Algorithms and Complexity Group, TU Wien, Austria
	\email{noellenburg@ac.tuwien.ac.at} \and
	Meta, Menlo Park, CA, USA\\
	\email{spupyrev@gmail.com}}

\maketitle              %
	
\begin{abstract}
A $k$-stack layout (or $k$-page book embedding) of a graph consists of a total order of the vertices, 
and a partition of the edges into $k$ sets of non-crossing edges with respect to the vertex order.
The stack number of a
graph is the minimum $k$ such that it admits a $k$-stack layout.

In this paper we study a long-standing problem regarding the stack number of planar directed acyclic graphs (DAGs),
for which the vertex order has to respect the orientation of the edges.
We investigate upper and lower bounds on the stack number of several families of planar graphs: We improve the constant upper bounds on the stack number of single-source and monotone outerplanar DAGs and of outerpath DAGs, and improve the constant upper bound for upward planar 3-trees. Further, we provide computer-aided lower bounds for upward (outer-) planar DAGs.

\end{abstract}

\section{Introduction}

Let $G=(V, E)$ be a simple graph with $n$ vertices and $\sigma$ be a total order of the vertex set $V$. 
Two edges $(u, v)$ and $(w, z)$ in $E$ with $u <_{\sigma} w$ \df{cross} if $u <_{\sigma} w <_{\sigma} v <_{\sigma} z$. 
A \df{$k$-stack layout} (\df{$k$-page book embedding}) of $G$ is a total order of $V$ and a partition of $E$ into 
$k$ subsets,
called \df{stacks} or \df{pages}, such that no two edges in the same subset cross. The \df{stack number} 
(\df{page number}, \df{book thickness})
of $G$ is the minimum $k$ such that $G$ admits a $k$-stack layout.

Heath et al.~\cite{HPT99a,HPT99b} extended the notion of stack
number to directed acyclic graphs (DAGs for short) in a natural way:
Given a DAG, $G=(V, E)$, a book embedding of $G$ is defined as
for undirected graphs, except that the total order $\sigma$ of $V$ is now required to be
a \df{linear extension} of the partial order of $V$ induced by $E$. That is, if $G$ contains
a directed edge $(u,v)$ from a vertex $u$ to a vertex $v$, then $u <_{\sigma} v$ in any feasible total order $\sigma$
of $V$. Heath et al. showed that DAGs with stack number
1 can be characterized and recognized efficiently; however, they proved that,
in general, determining the stack number of a DAG is \NP-complete.

The main problem raised by Heath et al.~\cite{HPT99a,HPT99b} and studied in several papers~\cite{AR96,HP97,GGLW06,FFR13,BLGDMP19}
is whether every upward planar DAG has constant stack number.
Recall that an \df{upward planar} DAG is a DAG that admits a 
drawing which is simultaneously \df{upward}, that is, each edge is represented by a curve monotonically 
increasing in the y-direction, and \df{planar}, that is, no two edges cross each other.

\begin{open}
	\label{open:planar}
	Is the stack number of every upward planar DAG bounded by a constant?
\end{open}

Notice that upward planarity is a necessary condition for the question: 
there exist DAGs which admit a planar non-upward embedding and that require $\Omega(n)$ stacks in any 
book embedding~\cite{HPT99a}; see \cref{fig:ns}.

\begin{figure}[!tb]
	\begin{subfigure}[b]{.45\linewidth}
		\center
		\includegraphics[page=5]{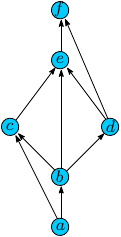}
		\caption{A DAG that requires $n/2$ stacks: edges $(u_i, v_i)$ form an $n/2$-twist}
		\label{fig:ns}
	\end{subfigure}    
	\hfill
	\begin{subfigure}[b]{.45\linewidth}
		\center
		\includegraphics[page=6,height=80pt]{pics/lower_bounds}
		\caption{An outerplanar DAG which is not upward planar~\cite{Pap94}}
		\label{fig:no-odag}
	\end{subfigure}    
	\caption{Planar DAGs that (a) need many stacks or (b) are not upward.}
\end{figure}

In its general form, \cref{open:planar} is still unresolved.
Heath et al.~\cite{HPT99a,HPT99b} showed that directed trees and unicyclic DAGs have stack 
numbers $1$ and $2$, respectively. Mchedlidze and Symvonis~\cite{MS09} proved that 
N-free upward planar DAGs, which contain series-parallel digraphs, have stack number $2$.
Frati et~al.~\cite{FFR13} gave several conditions under which upward planar triangulations 
have bounded stack number. In particular, they showed that (i) maximal upward planar $3$-trees have a 
constant stack number, and (ii) planar triangulations with a bounded (directed) diameter have a constant
stack number.
Notice that the graph in \cref{fig:ns}, that requires $\Omega(n)$ stacks, is a partial planar $3$-tree. 
Thus, it is reasonable to ask whether the stack number is bounded for (non-upward but directed acyclic) $2$-trees or
their subfamilies, outerplanar graphs, also known as simple $2$-trees.
This question has been first asked by Heath et al.~\cite{HPT99a} and 
recently highlighted by Bekos et al.~\cite{Bek19}.\footnote{Very recently, Jungeblut et al.~\cite{jmu-daoghcsn-22} resolved the problem by proving that every outerplanar DAG has constant stack number, upper bounded by $24776$, while there are directed acyclic (non upward planar) 2-trees with unbounded stack number. Their proof of the upper bound relies on Theorem 1b (see below) as a central tool and their second result solves an open question raised in a preprint version of this paper.}
Bhore et al.~\cite{bdmn-ubtpccr-22} gave upper bounds for some upward outerplanar graphs, namely internally-triangulated outerpaths ($16$ stacks), cacti ($6$ stacks), and upward outerplanar graphs whose biconnected components are $st$-outerplanar ($8$ stacks).

We emphasize that directed acyclic $2$-trees are planar but not necessarily upward, and thus, 
the results of Frati et al.~\cite{FFR13} do not apply for this class of graphs. For example, the graph
in \cref{fig:no-odag} is a directed acyclic partial $2$-tree (in fact, it is an outerpath DAG) but it 
cannot be drawn in an upward fashion.

\paragraph{Our Contributions.}
We investigate upper and lower bounds for the stack number of upward planar DAGs and outerplanar DAGs (oDAGs for short).
Throughout the paper, we express the bounds in terms of the maximum size of
a \df{twist} in the vertex order, that is, the maximum number of mutually crossing edges.
This parameter, also called the \df{twist number} of a graph, is tied to the stack number; 
analyzing the maximum twist size significantly simplifies the arguments at the cost of (slightly) worsened bounds
for the stack number.
We refer to \cref{sect:prel} for details and formal definitions.

In \cref{sect:odag}, we present constant upper bounds for several prominent subclasses of outerplanar DAGs. 

\begin{theorem}
	\label{thm:odag}
	~
	\begin{enumerate}[label=\bf\emph{\alph*}.,topsep=0pt,partopsep=0pt,itemsep=0pt]
		\item Every single-source outerplanar DAG has a constant stack number with a vertex order whose twist size is at most $3$.
		\item Every monotone outerplanar DAG has a constant stack number with a vertex order whose twist size is at most $4$.
		\item Every outerpath DAG has a constant stack number with a vertex order whose twist size is at most $4$.
	\end{enumerate} 
\end{theorem}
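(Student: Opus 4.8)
The plan is to handle the three subfamilies separately, in each case constructing an explicit vertex order together with a bound on the twist size, and then invoking the standard fact (to be recalled in \cref{sect:prel}) that a vertex order with twist size at most $t$ yields a stack layout with $\Oh(t)$ pages. So the entire content reduces to: \emph{find a linear extension of the DAG in which no $5$ edges pairwise cross} (resp.\ no $4$, in the single-source case).

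For part (a), the single-source case, I would root the outerplanar graph at its unique source $s$ and exploit the weak dual tree of the fixed outerplanar embedding. Since there is a single source, every internal face has a well-defined ``leftmost'' and ``rightmost'' monotone boundary path from its lowest to its highest vertex. I would build the order by a DFS of the weak dual, placing, at each face, the vertices of its outer chain in the order consistent with all edge directions; the key claim is that any edge is ``local'' to a constant number of faces along this traversal, so the nesting/crossing structure is bounded, forcing twist size $\le 3$. The main case analysis is to show that an edge $e$ and three other edges cannot all pairwise cross: two of the three would have to lie on the same side of $e$ in the dual decomposition, and one verifies that same-side edges are either nested or disjoint in the constructed order.

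For parts (b) and (c), I would similarly fix the outerplanar embedding and use the linear structure that ``monotone'' and ``outerpath'' provide. A monotone oDAG has a Hamiltonian-like spine along which the single-source/single-sink structure is replaced by monotonicity of the boundary cycle; an outerpath DAG has a path-shaped weak dual, so the faces are linearly ordered $F_1,\dots,F_m$ and consecutive faces share an edge. In both cases I would process the faces left to right, appending new vertices as they first appear while respecting edge orientations, and argue that at any ``cut'' between two blocks of the order only a bounded number of edges cross the cut — the sharing edge of consecutive faces plus a bounded amount of ``carry-over'' from monotonicity — which caps the twist at $4$. The subtlety here is that a topological order need not agree with the left-to-right face order, so I would need a merging argument: interleave the vertices of $F_{i+1}$ not yet placed into the current order at the position dictated by the shared edge, and check that this never creates a $5$-twist.

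The hard part will be the lower-level combinatorial lemma in each case that controls twists: showing that the global crossing pattern of the constructed order decomposes into a bounded number of ``layers'' (e.g., boundary-cycle edges vs.\ chords, or left-chain vs.\ right-chain edges of each face), each layer being outerplanar-like and hence contributing only a bounded twist, and that the number of layers stacked at any point is $\le 3$ or $\le 4$. I expect the single-source case to be cleanest because the source gives a canonical ``bottom'' to anchor the DFS; the monotone and outerpath cases will require more care in defining the order when the dual is not a path rooted at a natural face, and that is where most of the case analysis will go. Once the twist bounds are established, the constant stack number follows immediately from the twist-to-stacks relation recalled in \cref{sect:prel}, so no separate page-assignment argument is needed.
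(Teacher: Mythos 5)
Your high-level skeleton matches the paper's: both reduce the theorem to exhibiting, for each class, a linear extension with twist size at most $3$ (resp.\ $4$) and then invoke the twist-to-stack conversion recalled in \cref{sect:prel}. (Minor point: that conversion gives $7k^2$ stacks, not $\Oh(k)$; this does not affect the ``constant'' conclusion.) The recursion you sketch over the weak dual is also essentially the paper's recursion over the stellation sequence. However, there is a genuine gap at exactly the place you flag as ``the subtlety'': the construction and verification of the merged order. ``Appending new vertices as they first appear'' along a dual traversal does not produce a linear extension in general --- for instance, when the apex $x$ of the base edge $(s,t)$ satisfies $(s,x),(t,x)\in E$, the sub-order of the subgraph rooted at $(t,x)$ starts at $t$ and ends after $x$, so it must be wrapped around (interleaved with) the other sub-order rather than appended, as in the paper's merge $\sigma=[s,t,R_3,G_3,x,G_4,R_4]$ in \cref{thm:ss_odag_twist}. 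Once sub-orders are interleaved in this way, edges coming from faces that are far apart in the dual can have interleaving spans, so ``an edge is local to a constant number of faces'' does not by itself bound the twist; likewise ``only a bounded number of edges cross any cut'' is not the right quantity (a cut can be crossed by linearly many pairwise \emph{nested} edges; what must be bounded is the number of pairwise \emph{crossing} ones, and that bound is precisely what needs proof).

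What is missing is the mechanism the paper uses to make the merge verifiable: a small list of inductive invariants on the recursive sub-orders that bound the twist of specific edge sets spanning designated blocks of the order (e.g., for the single-source case, that in the order $[s,H_3,t,H_4]$ the edges from $\{s\}\cup H_3$ into $H_4$ contain no $2$-twist), chosen so that after the interleaved merge every hypothetical $4$-twist (resp.\ $5$-twist) decomposes into pieces each controlled by an invariant of one of the subgraphs plus at most one edge through each of $s$, $t$, $x$. Your ``layers'' idea points in this direction, but without identifying these invariants and checking that the merge preserves them --- the monotone and outerpath cases in \cref{thm:odag_twist} and \cref{thm:outerpath_twist} require five and nine such invariants, respectively --- the claimed twist bounds of $3$ and $4$ are asserted rather than proved. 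As written, the proposal is a plausible plan whose central combinatorial lemma is still open.
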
 

The recent result of Davies~\cite{d-ibccg-22} implies that every graph with a vertex order whose twist size is at most $k$, 
has stack number at most $2k \log_2 k + 2k \log_2\log_2 k + 10k$ (and for $k=3$ Davies proves an upper bound of $19$).
It follows that single-source oDAGs have stack number at most $19$, while monotone oDAGs and outerpath DAGs have stack number 
at most $64$. We note that the stack assignment for the provided vertex orders can likely be improved. For example, 
we show an upper bound of $4$ stacks for single-source oDAGs (refer to \cref{thm:ss_odag_stack} in \cref{sect:odag}). 

Our proof technique utilized for \cref{thm:odag} can be applied to other classes of DAGs.
In \cref{sect:up3t} we tighten the upper bound on the stack number of upward (maximal) planar $3$-trees.
Frati et al.~\cite{FFR13} bound the stack number of upward planar $3$-trees by a function of
the size of the maximum twist size without providing an explicit bound.
We strengthen their results by presenting an arguably simpler proof that yields an exact (small) bound of $5$ on the maximum twist size (which by Davies' result~\cite{d-ibccg-22} translates into a stack number of at most $85$).

\wormholeThm{thm-up3t}
\begin{theorem}
	\label{thm:up3t}
	Every upward planar $3$-tree has a constant stack number with a vertex order whose twist size is at most $5$.
\end{theorem}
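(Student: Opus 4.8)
The plan is to combine the recursive ``stacking'' structure of $3$-trees with the rigidity of upward planar embeddings, proving the twist bound by induction on $|V(G)|$ and then invoking the twist--stack relation from \cref{sect:prel}. We may assume $G$ is maximal (a triangulation); the general case then follows by edge deletion. Fix an upward planar drawing with triangular outer face $\{s,m,t\}$; as is standard for upward planar triangulations, $s$ is the unique source of $G$, $t$ is the unique sink, and the outer boundary consists of the edge $st$ together with the path $s\arr m\arr t$. If $|V(G)|>3$, let $z$ be the unique vertex adjacent to all of $s,m,t$, i.e.\ the first vertex stacked into the outer face. Then $G$ is the edge-union of three upward planar $3$-trees $G_1,G_2,G_3$ with outer triangles $\{s,m,z\}$, $\{m,t,z\}$, $\{s,t,z\}$; they pairwise share exactly one edge of the star $\{sz,mz,tz\}$, all contain $z$, and the internal vertices of the $G_i$ partition $V(G)\setminus\{s,m,t,z\}$. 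In particular \emph{every} edge of $G$ already lies in some $G_i$. The orientations $s\arr z$ and $z\arr t$ are forced because $s,t$ are source and sink; the orientation of the single edge $mz$ is the only remaining degree of freedom, and it determines which of $m,z$ is source/intermediate/sink in each $G_i$.

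Next we build the vertex order recursively, carrying a strengthened invariant. For a sub-$3$-tree $H$ with outer source $p$, intermediate $q$, and sink $r$, the recursion returns a linear extension of $H$ in which $p$ is first, $r$ is last, $q$ occupies a position prescribed by the recursion, and — the strengthening — only $\Oh(1)$ edges of $H$ cross the position of $q$. At the top level we place $\{s,m,t,z\}$ as $s,z,m,t$ or $s,m,z,t$ according to the orientation of $mz$, and then splice in the recursively constructed orders of $G_1,G_2,G_3$, placing each $G_i$'s three outer vertices at the positions the global order has already assigned them. One first checks that this is consistent, i.e.\ that every child's recursion is content with where its source, intermediate, and sink have been put; this is a direct verification from the forced orientations $s\arr z\arr t$. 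The internal vertices of the three children interleave only around the shared vertices $z$ and $m$, so the only edges of two distinct children that can cross are ones crossing the position of $z$ or of $m$, and by the strengthened invariant each child contributes only $\Oh(1)$ of those.

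The main obstacle is the quantitative part: arranging that the auxiliary ``edges crossing $q$'' bound and the target bound of $5$ on the twist are simultaneously carried by the induction. Given a set $F$ of pairwise crossing edges of $G$, assign each edge of $F$ to a child containing it. If all of $F$ lies in one child, induction gives $|F|\le5$. Otherwise $F$ meets at least two children, so every edge of $F$ crosses the position of $z$ or of $m$; the strengthened invariant caps each child's contribution, and a short case analysis on the orientation of $mz$ and on which of $s,m,t,z$ the edges of $F$ are incident to again yields $|F|\le5$. The delicate point is that $F$ must be prevented from being almost full ($4$ or $5$ edges) inside a single child \emph{and} simultaneously picking up extra edges at the current level; this is exactly what dictates how small the auxiliary ``$\le c$ edges cross $q$'' bound must be, and tuning $c$ so that the twist lands at exactly $5$ rather than $6$ or more is where the bulk of the work lies. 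Once twist size at most $5$ is established for our order, \cref{sect:prel} converts it into the claimed constant upper bound on the stack number.
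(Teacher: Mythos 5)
You set up the same decomposition as the paper (the unique vertex $z$ adjacent to the outer triangle, three sub-$3$-trees, reversal of all edges to fix the orientation of $mz$), but the quantitative heart of the argument is both missing and, as stated, wrong. Your strengthened invariant asks that only $\Oh(1)$ edges of the subgraph cross the position of the intermediate outer vertex $q$; no linear extension can achieve this. Consider the upward planar $3$-tree obtained by stacking $z$ into $\langle s,m,t\rangle$ and then repeatedly stacking $v_{i+1}$ into the face $\langle s, v_i, t\rangle$ (with $v_0 = z$), oriented as a planar $st$-graph: every $v_i$ carries the edges $s \arr v_i$ and $v_i \arr t$, so in \emph{any} vertex order each $v_i$ contributes an edge spanning the position of $m$ (namely $v_i \arr t$ if $v_i$ precedes $m$, and $s \arr v_i$ otherwise). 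Hence $\Omega(n)$ edges cross $m$'s position, and an $\Oh(1)$ bound on their number cannot be carried by any induction. What can be maintained --- and what the paper's proof does maintain --- is a bound on the maximum \emph{twist} of this edge set: for $\sigma = [s, H_1, m, H_2, t]$ it requires $\tw{\{s\} \cup H_1 \arr H_2 \cup \{t\}} \le 2$ (invariant \ref{up3t:I1}); in the example above the spanning edges all share $s$ or $t$, so their twist is $2$ even though their number is linear. Replacing ``few edges'' by ``small twist'' is not cosmetic: it forces the case analysis to track in which child each member of a candidate twist lies and to reuse the per-child invariants, rather than to count edges over a cut.

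Two further points keep the proposal from being a proof. First, ``splice in the children's orders, placing each child's outer vertices where the global order has put them'' does not determine the order: with $\sigma^r = [s, R_1, x, R_2, t]$, $\sigma^g = [s, G_1, x, G_2, m]$, $\sigma^b = [x, B_1, m, B_2, t]$, the block $R_2$ may legally go before or after $m$, and the relative interleaving of $G_2, R_2, B_1$ between $x$ and $m$ must be chosen; the paper fixes $\sigma = [s, R_1, G_1, x, G_2, R_2, B_1, m, B_2, t]$, and the twist-$5$ verification depends on exactly these choices. Relatedly, your claim that edges of two distinct children can cross only if they span the position of $z$ or of $m$ is false: an edge $(x, r)$ with $r \in R_2$ and an edge $(g, m)$ with $g \in G_2$ cross, yet neither spans either position, so inter-child crossings cannot be dismissed that cheaply. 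Second, you explicitly defer ``tuning $c$'' and the final case analysis, but that analysis, run with the correct twist-type invariant, \emph{is} the proof of the bound $5$; as written, your argument establishes only the structural decomposition that the paper also uses and leaves the stated bound unproven.
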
 

The proofs of \cref{thm:odag} and \cref{thm:up3t} are constructive and lead to linear-time algorithms for 
constructing the vertex orders.

Finally, we explore lower bounds on the stack number of planar DAGs in \cref{sect:lower}.
They rely on computational experiments using a SAT formulation of the book embedding problem. %

\wormholeThm{thm-other}
\begin{theorem}
	\label{thm:other} 
	~
	\begin{enumerate}[label=\bf\emph{\alph*}.,topsep=0pt,partopsep=0pt,itemsep=0pt]
		\item There exists a single-source single-sink upward outerplanar DAG with stack number $3$.
		\item There exists an upward outerplanar DAG with stack number $4$.
		\item There exists an upward planar 3-tree DAG with stack number $5$.
	\end{enumerate}
\end{theorem}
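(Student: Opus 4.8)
The plan is to exhibit, for each of the three parts, a concrete small DAG together with a matching lower bound obtained from the SAT-based solver of~\cite{bob}. For part~(a) I would construct a single-source single-sink upward outerplanar DAG $G_a$ on a small number of vertices and then argue: (i) $G_a$ is outerplanar, upward planar, has a unique source and unique sink (easy to verify by inspection of the construction); (ii) $G_a$ has stack number at least $3$, i.e., no $2$-stack layout exists. The lower bound (ii) is the crux and is established by a finite computation: we encode the existence of a $2$-stack layout respecting the topological constraints as a Boolean satisfiability instance (variables for the relative order of each vertex pair with transitivity and linear-extension clauses, plus variables assigning each edge to one of two pages with non-crossing clauses) and report that the solver certifies unsatisfiability. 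An upper bound of $3$ follows since every outerplanar undirected graph has stack number at most $2$ — wait, that only gives $\le 2$ undirected; for the directed case we instead note that $G_a$, being outerplanar, is subhamiltonian as an undirected graph, but the topological constraint may force crossings, so we exhibit an explicit $3$-stack layout (again verifiable by hand or reported by the solver in satisfiable mode with $k=3$). Parts~(b) and~(c) follow the same template with larger gadgets: for (b) a (not necessarily single-sink) upward outerplanar DAG $G_b$ with an explicit $4$-stack layout and a SAT-certified proof that $3$ stacks do not suffice; for (c) an upward planar $3$-tree $G_c$ with an explicit $5$-stack layout and a SAT-certified proof that $4$ stacks do not suffice — the latter matching the upper bound of Theorem~\ref{thm:up3t}.

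The key steps, in order: first, describe the three graphs $G_a, G_b, G_c$ explicitly (as edge lists or via a figure), each built by a short recursive/iterative rule so that membership in the relevant class is transparent; second, verify the structural properties (outerplanarity, being a $3$-tree, upward planarity, source/sink counts) directly from the construction; third, present the explicit upper-bound layouts (vertex order plus page assignment) for $k=3,4,5$ respectively, and check non-crossing by inspection; fourth, state the SAT formulation precisely enough that the unsatisfiability claims for $k=2,3,4$ are reproducible, and cite the implementation~\cite{bob} for the certified results.

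The main obstacle is twofold. On the mathematical side, the difficulty is \emph{designing} the gadgets: we need graphs small enough that the SAT solver terminates (the search space grows super-exponentially in $n$, since it includes all linear extensions), yet structurally rich enough to force the extra page — this typically requires nesting several ``twist-forcing'' sub-configurations whose mutual interaction cannot be resolved with fewer pages, and finding the minimal such configuration is a search problem in its own right. On the verification side, since the lower bounds are computer-assisted, the proof must make the encoding and the solver output auditable; I would address this by giving the CNF encoding explicitly and, ideally, including the DRAT unsatisfiability certificates (or at least making them available via~\cite{bob}) so that the claim does not rest on trusting a black box. I do not expect the upper-bound layouts to pose real difficulty — once a good vertex order is guessed, a greedy or first-fit page assignment, checked by the solver in its satisfiable mode, suffices.
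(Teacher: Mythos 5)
Your methodology is exactly the paper's: the proof of \cref{thm:other} consists of exhibiting three concrete witness graphs (\cref{fig:3out}, \cref{fig:4out}, \cref{fig:5st-1}) and certifying their stack numbers with the SAT formulation of the book-embedding problem, adapted to respect linear extensions, via the implementation~\cite{bob}. So there is no divergence in approach, and your remarks about making the encoding and the unsatisfiability certificates auditable are reasonable strengthenings of what the paper states only by citation.

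The genuine gap is that you never produce the witnesses, and for an existence theorem of this kind the witnesses \emph{are} the proof. You explicitly defer ``designing the gadgets'' as the main obstacle, but that design is precisely the mathematical content here: one must find, e.g., a single-source single-sink upward outerplanar DAG that is so constrained that no $2$-stack layout exists --- the paper's example has only six vertices and, crucially, admits only two linear extensions, $[a,b,c,d,e,f]$ and $[a,b,d,c,e,f]$, which is what makes the $3$-stack lower bound checkable (even by hand) and keeps the SAT search tractable; similarly, the $4$-stack example is an upward outerplanar DAG with four sources and three sinks that is \emph{not} monotone (it needs transitive stellations via \ref{op1}), which is exactly what places it outside the classes covered by \cref{thm:odag}, and the $5$-stack example is a specific upward planar $3$-tree. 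Without such explicit graphs, your steps (ii)--(iv) have nothing to run on, so nothing is established. A minor inaccuracy as well: for part (c) you say the $5$-stack lower bound ``matches the upper bound of Theorem~\ref{thm:up3t}'', but that theorem bounds the \emph{twist} size by $5$, not the stack number; via the twist-to-stack conversion it only yields a (larger) constant upper bound, so there is no matching pair of bounds to invoke --- the $5$-stack upper bound for the specific witness must come from an explicit layout or the solver, as you indeed propose.
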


\paragraph{Other Related Work.}

Book embeddings of undirected graphs received a lot of attention due to their numerous applications.
It is known that the graphs with stack number $1$ are exactly outerplanar graphs, while graphs with
stack number $2$ are exactly the subhamiltonian graphs, %
which implies
that it is \NP-complete to decide whether a graph admits a $2$-stack layout. More generally, 
every planar graph has stack number at most $4$, and the bound is worst-case optimal~\cite{Yan89,BKKPRU20}.

Stack numbers of directed acyclic graphs have also been extensively studied. 
Similarly to the undirected case, it is \NP-complete to test whether the stack
number of a DAG is at most $k$, even when $k=2$~\cite{blfgmr-rdwpn-23}.
Several works analyzed the stack number of partially ordered sets (posets), which
can be viewed as upward planar DAGs without \df{transitive} edges.
Nowakowski and Parker~\cite{NP89} asked whether the stack number of a planar poset is bounded by a constant.
Notice that the question is a special case of \cref{open:planar}.
Several works provide bounds for the stack number of special classes of posets and
bounds in terms of various parameters (e.g., height or bump number)~\cite{AJZ15,H89,Sys89}.
To  our knowledge, there is no indication that the absence of transitive edges simplifies
\cref{open:planar}.

As for the lower bounds on the stack number of DAGs and posets, not many results are known.
It is easy to construct a planar poset with  stack number $4$~\cite{H89,AJZ15}, which 
for a long time has been the best known lower bound for the stack number of upward planar DAGs.
Our \cref{thm:other} strengthens the result by showing that there exist (maximal) upward
planar $3$-trees with stack number~$5$. Merker~\cite{M20} independently constructed a planar poset with stack number~$5$.
Jungeblut et al.~\cite{jmu-sbpnupg-22} further showed that upward planar graphs of constant width and height
have a bounded stack number, and combined the two results to get an $\Oh(n^{2/3} \log(n)^{2/3})$ upper
bound on the stack number of general upward planar graphs. 
Yet these results do not
imply any upper bound for the graph classes considered in \cref{sect:odag} (since oDAGs can be non-upward)
or in \cref{sect:up3t} (since upward planar 3-trees can have linear width and height).

\medskip

\noindent \textit{Due to space constraints, details of omitted/sketched proofs are in the appendix.}

\section{Preliminaries}
\label{sect:prel}

Throughout the paper, $G = (V, E)$ is a simple directed graph (digraph) with vertex set $V$ and edge (arc) set $E$.
A \df{vertex order}, $\sigma$, of a digraph $G$ is a linear extension of $V$. That is, 
if $G$ contains an edge from a vertex $u$ to a vertex $v$, denoted $(u, v) \in E$, then
$u <_{\sigma} v$ in any feasible vertex order $\sigma$ of $V$.
Let $F$ be a set of $k \geq 2$ independent (that is, having no common endpoints) edges 
$(s_i, t_i), 1 \le i \le k$. 
If $s_1 <_{\sigma} \dots <_{\sigma} s_k <_{\sigma} t_1 <_{\sigma} \dots <_{\sigma} t_k$, then
$F$ is a \df{$k$-twist}. Two independent edges forming a $2$-twist are 
called \df{crossing}.
A \df{$k$-stack layout} of $G$ is a pair $(\sigma, \{\Sh_1, \dots, \Sh_k\})$, 
where $\sigma$ is a vertex order of $G$ and $\{\Sh_1, \dots, \Sh_k\}$ is a partition of $E$ into 
\df{stacks}, that is, sets of pairwise non-crossing edges.
The minimum number of stacks in a stack layout of $G$ is its \df{stack number}.

The size of the largest twist in a vertex order is tied to the number of stacks needed for
the edges of the graph under the vertex order. In one direction, a vertex order with a $k$-twist needs at least 
$k$ stacks, since each edge of a twist must be in a distinct stack. In the other direction, a vertex order with no
$(k+1)$-twist needs at most $\Oh(k \log k)$ stacks~\cite{d-ibccg-22}, which matches the lower bound of $\Omega(k \log k)$~\cite{KK97}.
An order without a $2$-twist (that is, when $k=1$) corresponds to an outerplanar drawing of a graph, which is
a $1$-stack layout.
For $k=2$ (an order without a $3$-twist), $5$ stacks are sufficient and sometimes necessary~\cite{K88,A96}.

In the following we use notation $E(V_1 \arr V_2)$ to indicate a subset of $E$ between disjoint
subsets $V_1, V_2 \subseteq V$, that is, 
$(x, y) \in E$ for $x \in V_1, y \in V_2$.
Notation $E(V_1 \arr V_2, V_3 \arr V_4, \dots)$ indicates the union of the edge sets, that is, 
$E(V_1 \arr V_2) \cup E(V_3 \arr V_4) \cup \dots$.
Similarly, we write $\tw{V_1 \arr V_2} \le k$ to indicate that
the maximum twist of the edges $E(V_1 \arr V_2)$ is of size at most $k$.
Slightly abusing the notation, we sometimes write $E(v \arr V_1)$ or $E(V_1 \arr v)$,
where $v \in V \setminus V_1$ and $V_1 \subset V$.
To specify a relative order between disjoint subsets of vertices, we use $\sigma = [V_1, V_2, \dots, V_r]$, 
where $V_i \subseteq V$ for $1 \le i \le r$. For the vertex order $\sigma$, it holds that 
$x <_{\sigma} y$ for all $x \in V_i$, $y \in V_j$ such that $i < j$.

\section{Outerplanar DAGs}
\label{sect:odag}
We study the stack number of oDAGs, that is, directed acyclic outerplanar graphs.
We stress that such graphs are planar but not necessarily upward. For example, the graph
in \cref{fig:no-odag} cannot be drawn in an upward fashion. 
We assume oDAGs are maximal as it is straightforward to augment an oDAG to a
maximal one, and the stack number is a monotone parameter
under taking subgraphs.

It is well-known that every maximal outerplanar directed acyclic graph can be constructed from an 
edge, which we call the \df{base} edge, by repeatedly \df{stellating} edges~\cite{jmu-daoghcsn-22}; that is,
picking an edge, $(s, t)$, on its outerface 
and adding a vertex $x$ together with two edges connecting $x$ with $s$ and $t$; see \cref{fig:operations}.
In order to keep the graph acyclic, the directions of the new edges must be either \df{transitive}:

\smallskip
\noindent\begin{enumerate*}[label=\textbf{O.\arabic*},ref=O.\arabic*]
	\item\label{op1} $(s, x) \in E$ and $(x, t) \in E$,
\end{enumerate*} 
or \df{monotone}:

\smallskip
\noindent\begin{enumerate*}[label=\textbf{O.\arabic*},ref=O.\arabic*,resume]
	\item\label{op2} $(s, x) \in E$ and $(t, x) \in E$, \qquad\qquad
	
	\item\label{op3} $(x, s) \in E$ and $(x, t) \in E$.
\end{enumerate*} 

\smallskip

We emphasize that every edge, including the base edge, in the construction sequence of outerplanar graphs 
can be stellated at most once; relaxing the condition yields a construction scheme for $2$-trees.

We study subclasses of outerplanar DAGs that can be constructed using a subset of the three operations.
First observe that so-called \df{transitive} oDAGs that are constructed from an edge
by applying \ref{op1} have a single source vertex, a single sink vertex, and an edge connecting
the source with the sink. Such graphs are trivially embeddable in one stack.
In \cref{sect:ss_odag} we observe that single-source oDAGs can be constructed using
\ref{op1} and \ref{op2}; similarly, single-sink oDAGs can be constructed by \ref{op1} and \ref{op3}.
We show that single-source (single-sink) oDAGs admit a layout in a constant number of stacks.
Furthermore, using monotone operations (\ref{op2} and \ref{op3}), one can construct
outerplanar graphs with arbitrarily many sources and sinks. Such \df{monotone} oDAGs
admit layouts in a constant number of stacks, as we prove in \cref{sect:mon_odag}.
Finally, we investigate \df{outerpath} DAGs, that is, oDAGs whose weak dual is a path.
In \cref{sect:outerpath_dag} we describe a construction scheme for such graphs and prove that their
stack number is constant.

\begin{figure}[!tb]
    \begin{subfigure}[b]{.45\linewidth}
		\center
		\includegraphics[page=4,width=\textwidth]{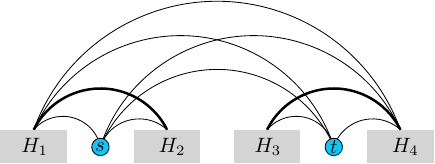}
		\caption{}
		\label{fig:operations}
	\end{subfigure}   	 
	\hfill 
    \begin{subfigure}[b]{.45\linewidth}
		\center
		\includegraphics[page=3,width=\textwidth]{pics/odag}
		\caption{}
		\label{fig:invariants}
	\end{subfigure}   	
 	\caption{(a)~Possible ways of stellating base edge $(s, t)$ with a vertex $x$ for
	constructing oDAGs. (b) Vertex order utilized for the inductive schemes in \cref{sect:odag}.}
\end{figure}

All our proofs are based on an inductive scheme by decomposing a given oDAG into two subgraphs
that can be embedded so that a list of carefully chosen invariants is maintained. Then we show
how to combine the layouts of the subgraphs and verify the invariants. To this end, we consider a base edge
$(s, t) \in E$ and define a vertex order consisting of six vertex-disjoint parts
$\sigma = [H_1$, $s$, $H_2$, $H_3$, $t$, $H_4]$, where $H_i \subset V, 1 \le i \le 4$.
For all the considered graph classes, we require that $E(H_2 \arr H_3) = \emptyset$; see \cref{fig:invariants}.
In all figures in the paper all edges are oriented from left to right unless the arrows explicitly indicate
edge directions.

\subsection{Single-Source oDAGs}
\label{sect:ss_odag}

Here we consider \df{single-source} (\df{single-sink}) outerplanar DAGs that contain only one
source (sink) vertex. Single-source oDAGs can be constructed from an edge by 
applying two of the operations, \ref{op1} and \ref{op2}. To this end, choose an edge incident to the
source on the outerface of the graph as the base edge, and observe that applying \ref{op3} would create a predecessor of the source or an additional source. Similarly, single-sink graphs can be constructed by two operations, \ref{op1} and \ref{op3}.

\wormholeLm{lm-ss-odag-twist}
\begin{lemma}
	\label{thm:ss_odag_twist}
	Every single-source (single-sink) outerplanar DAG admits an order whose twist size is at most $3$.
\end{lemma}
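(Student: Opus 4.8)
The plan is to prove the statement by induction using the decomposition scheme announced just before the lemma: for a maximal single-source oDAG we pick a base edge $(s,t)$ incident to the source $s$ on the outerface, and we seek a vertex order $\sigma = [H_1, s, H_2, H_3, t, H_4]$ with $E(H_2 \arr H_3) = \emptyset$ satisfying a small list of invariants strong enough to both bound the global twist by $3$ and be re-establishable after combining sub-layouts. Since $s$ is the unique source, every vertex is reachable from $s$, so in fact $H_1 = \emptyset$ (nothing precedes the source), and the order simplifies to $\sigma = [s, H_2, H_3, t, H_4]$. I would first set up the induction on the construction sequence: the base case is a single edge $(s,t)$, order $[s,t]$, twist $1$. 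For the inductive step, consider the last stellation that produced the current graph, or — more convenient for the $H_2/H_3$ split — recursively decompose along the edge $(s,t)$: the triangle stellating $(s,t)$ uses a new vertex $x$, and $x$ together with the two sub-oDAGs hanging off the two new edges partition the rest of the graph.

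The key structural step is to analyze the three admissible orientations of the stellation triangle on $(s,t)$ and decide where $x$ goes and how the two recursively-ordered pieces interleave. Because $s$ is the global source, operation \ref{op3} ($x \to s$, $x\to t$) is forbidden on any edge whose tail-side is already reachable from $s$, so effectively only \ref{op1} ($s\to x\to t$) and \ref{op2} ($s\to x$, $t\to x$) occur. In case \ref{op1}, $x$ must sit strictly between $s$ and $t$; the sub-oDAG $G_1$ rooted at $x$ ``below'' edge $(s,x)$ and the sub-oDAG $G_2$ rooted at $x$ ``below'' edge $(x,t)$ each return orders of the form $[x,\dots]$, and I would place $G_1$'s interior in $H_2$ and $G_2$'s interior partly in $H_2$ (before $t$) — here the design of the invariant decides the precise placement. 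In case \ref{op2}, $x$ is a sink of the triangle, so $x$ must come after both $s$ and $t$, i.e.\ $x \in H_4$, and the sub-oDAGs attach accordingly. The invariant I expect to need, beyond $E(H_2\arr H_3)=\emptyset$, is something like: all edges of $E(s \arr H_2 \cup \{t\})$ can be ``nested'' from $s$, all edges into $t$ come from a prefix, and crucially no three edges among the ``long'' edges $E(s\arr\cdot)$, $E(\cdot\arr t)$, and the edges spanning from before $t$ to $H_4$ mutually cross — which is exactly what pins the twist at $3$ rather than letting it grow.

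I would then verify that merging two sub-layouts with twist $\le 3$ does not create a $4$-twist: any $4$-twist would need four pairwise-crossing edges, and by the $H_2/H_3$ separation plus the orientation constraints, at most a bounded number of the involved edges can come from each piece or span the gap, and a short case check rules out four. The main obstacle I anticipate is choosing the invariant so that it is simultaneously (i) strong enough that the interleaving in case \ref{op1} — where $x$ lands between $s$ and $t$ and drags a whole sub-oDAG with it — cannot produce a $4$-twist using one edge from $G_1$, one from $G_2$, the edge $(s,t)$ or $(s,x)$, and one more long edge, and (ii) weak enough to be preserved, in particular making sure $E(H_2\arr H_3)=\emptyset$ survives the merge (this forces $G_1$ to be placed entirely to one side of $G_2$ with no crossing attachment edges). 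Getting the bookkeeping of which recursively-produced part goes into $H_2$ versus $H_3$ versus $H_4$ right, consistently across all stellation types, is the delicate part; once the invariant is correctly stated, each individual case is a routine crossing check. The single-sink case is symmetric, reversing all edge orientations and the vertex order.
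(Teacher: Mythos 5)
You have correctly identified the paper's overall strategy: induct on the stellation sequence, split $G$ at the unique common neighbour $x$ of the base edge $(s,t)$ into the two sub-oDAGs on the new edges, recursively order them, interleave the orders, and maintain invariants that both bound the global twist by $3$ and survive the merge (and you rightly observe that only \ref{op1} and \ref{op2} can occur at an edge incident to the source). However, the proposal stops exactly where the proof begins: the invariant is never stated, the merged orders are never specified, and the crossing analysis is never carried out — and these are not routine afterthoughts. The paper's argument hinges on one precise auxiliary invariant: writing the order as $\sigma=[s,H_3,t,H_4]$ (so in fact $H_2=\emptyset$ too, not just $H_1$), it requires $\tw{s \arr H_4, H_3 \arr H_4}\le 1$, i.e.\ the edges from $s$ into $H_4$ together with the edges from $H_3$ into $H_4$ are pairwise non-crossing. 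It also hinges on carefully chosen interleavings: in the \ref{op2} case the combined order is $[s,t,R_3,G_3,x,G_4,R_4]$, nesting the tail of the $(s,x)$-subgraph inside the tail of the $(t,x)$-subgraph, and in the \ref{op1} case it is $[s,G_3,x,G_4,R_3,t,R_4]$. Verifying that no $4$-twist arises is then a genuine multi-case argument that repeatedly invokes the twist-$\le 1$ condition; merging two twist-$3$ orders without such a condition can easily create a $4$-twist, so the ``short case check'' you defer to cannot even be attempted until the invariant is fixed. The replacement invariant you sketch (``no three of the long edges mutually cross'', a twist-$\le 2$ condition on an unspecified edge family) is both vaguer and weaker than what the paper uses, and you give no argument that it is inductively maintainable.

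There are also smaller slips that signal the bookkeeping has not been worked out: in the \ref{op1} case the subgraph on $(s,x)$ has source $s$ and base edge $(s,x)$, so its recursive order has the form $[s,\dots,x,\dots]$, not $[x,\dots]$ as you state; and your five-part order $[s,H_2,H_3,t,H_4]$ leaves unresolved exactly the question your own plan flags as delicate, namely which recursively produced pieces land in $H_2$ versus $H_3$ versus $H_4$ and which twist bounds hold across those gaps. In short: right skeleton, same approach as the paper, but the heart of the proof — the exact invariant, the exact interleavings, and the case analysis showing they are preserved — is missing, and what is sketched in its place would not suffice as stated.
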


\begin{sketch}
	Let $G = (V, E)$ be a given oDAG with a unique source $s \in V$, and assume that $(s, t) \in E$ is 
	the base edge in the construction sequence of $G$.	
	We prove the claim by induction on the size of $G$ by using
	the following invariant (see \cref{fig:ss_odag_inv}):
	There exists an order of $V$ consisting of four parts, $\sigma = [s , H_3, t, H_4]$ (that is, $H_1 = H_2 = \emptyset$), such that the following holds:

\smallskip
	\noindent~~\begin{enumerate*}[label=\textbf{I.\arabic*},ref=I.\arabic*]
		\item\label{ss:I1} $\tw{s \arr H_4, H_3 \arr H_4} \le 1$ \qquad\qquad
		
		\item\label{ss:I2} $\tw{E} \le 3$
	\end{enumerate*} 
\smallskip

	\begin{figure}[!t]
		\begin{subfigure}[b]{.40\linewidth}
			\center
			\includegraphics[page=1,height=40pt]{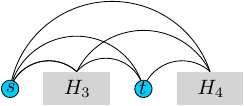}
			\caption{The invariant used in the proof}
			\label{fig:ss_odag_inv}
		\end{subfigure}    
		\hfill
		\begin{subfigure}[b]{.25\linewidth}
			\center
			\includegraphics[page=3,height=100pt]{pics/ss_odag}
			\caption{Case~1}
			\label{fig:ss_odag_case1}
		\end{subfigure}   
		\begin{subfigure}[b]{.25\linewidth}
			\center
			\includegraphics[page=2,height=100pt]{pics/ss_odag}
			\caption{Case~2}
			\label{fig:ss_odag_case2}
		\end{subfigure}   
		\caption{An illustration for \cref{thm:ss_odag_twist}}
	\end{figure}

	Now we prove that these invariants can be maintained.
	If $G$ is a single edge, then the base of the induction 
	clearly holds. For the inductive case, we consider
	the base edge $(s, t)$ of $G$. Let $x$ be the unique neighbor of $s$ and $t$. 
    Since $G$ is a single-source oDAG, there are
	two ways the edges between $x$ and $s, t$ are directed, corresponding to
	operations \ref{op1} and \ref{op2}. Consider both cases.
	
	\paragraph{Case 1.} 
	First assume $(s, x) \in E$ and $(t, x) \in E$. It is easy to see that $G$ is decomposed into two edge-disjoint 
	subgraphs sharing a single vertex $x$; denote the graph containing $(s, x)$ by $G^g$ and the graph containing
	$(t, x)$ by $G^r$; see \cref{fig:ss_odag_case1}. 
	Since $G$ is a single-source oDAG, $G^g$ is also a single-source oDAG with source $s$ and base edge $(s, x)$.
	Similarly, $G^r$ is a single-source oDAG with source $t$ and base edge $(t, x)$.
	By the induction hypothesis, the graphs admit orders $\sigma^g$ and $\sigma^r$ satisfying the
	invariant. Next we combine the orders into a single~one~for~$G$. 
	
	Let $\sigma^g = [s, G_3, x, G_4]$ and $\sigma^r = [t, R_3, x, R_4]$. 
	Then we set 
	\[	
	\sigma = [s, t, R_3, G_3, x, G_4, R_4]
	\]	
	and observe that in the order
	$H_3 = \emptyset$, $H_4 = [R_3, G_3, x, G_4, R_4]$; see \cref{fig:ss_odag_case1_ind}.
	It is easy to see that $\sigma$ is a linear extension of $V$, that is, $u <_{\sigma} v$ for all edges $(u, v) \in E$.
	Next we verify the conditions of the invariant.
	
	\begin{figure}[!ht]
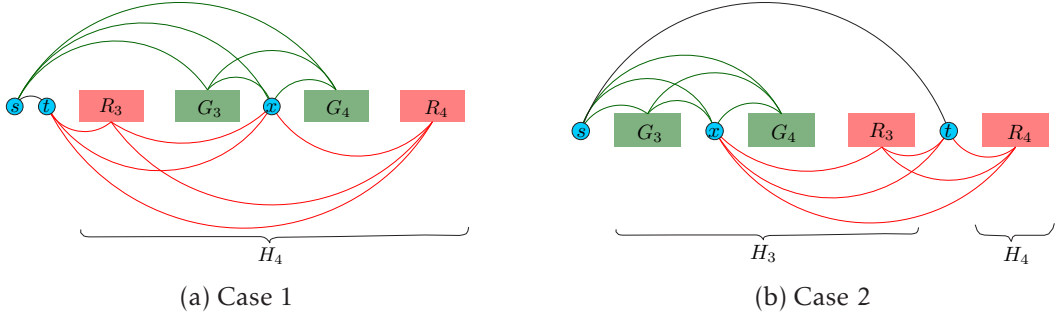

		\begin{subfigure}[b]{.5\linewidth}
			\center
			\includegraphics[page=4,height=90pt]{pics/ss_odag}
			\caption{Case~1}
			\label{fig:ss_odag_case1_ind}
		\end{subfigure}   
		\begin{subfigure}[b]{.5\linewidth}
			\center
			\includegraphics[page=5,height=90pt]{pics/ss_odag}
			\caption{Case~2}
			\label{fig:ss_odag_case2_ind}
		\end{subfigure}   
		\caption{An inductive step in the proof of \cref{thm:ss_odag_twist}}
	\end{figure}  
	
	\begin{enumerate}
		\item[\ref{ss:I1}.] 
		Since $H_3 = \emptyset$, we have 
		$\tw{s \arr H_4, H_3 \arr H_4} = \tw{s \arr H_4} \le 1$, where
		the inequality holds because all edges $E(s \arr H_4)$ share a common vertex, $s$.
		
		\item[\ref{ss:I2}.] 
		Consider the maximum twist $\kappa$ in $G$ under vertex order $\sigma = [s, H_3, t, H_4]$, and
		suppose for contradiction that $|\kappa| \ge 4$.
		Observe that $H_3 = \emptyset$ in the considered case, and $\kappa$ may contain at most one edge incident to $s$
		and at most one edge incident to $t$. Thus, at least two of the edges of $\kappa$
		are from $E(H_4 \arr H_4)$; see \cref{fig:ss_odag_case1_ind}.
		Denote one of the two edges by $e \in \kappa$.
		
		Since $e \in E(H_4 \arr H_4)$, both endpoints of $e$ are in $R_3 \cup G_3 \cup \{x\} \cup G_4 \cup R_4$.
		Notice that if the two endpoints are in the same part (e.g., $R_i$ or $G_i$ for some $i$), then all edges of
		$\kappa$ have at least one endpoint in that part (since they all cross $e$), and specifically all edges of $\kappa$ are either in 
		$G^g$ or $G^r$, which implies that $|\kappa| \le 3$ by the induction hypothesis. 
		Hence, we assume that $e$ belongs to
		$E(R_3 \arr R_4, R_3 \arr x, x \arr R_4, G_3 \arr G_4, G_3 \arr x, x \arr G_4)$ and that none of the edges of $\kappa$ contains both endpoints in the same part $R_i$ or $G_i$ of $V$.
		
		\begin{itemize}   
			\item If $e \in E(G_3 \arr x, x \arr G_4)$, then the only edges potentially crossing $e$ are in $E(G_3 \arr G_4, s \arr G_4, s \arr G_3)$, that is, they all belong to $G^g$.
			In that case $|\kappa| \le 3$ by the hypothesis \ref{ss:I2} applied to $G^g$, a contradiction.
			Therefore, $\kappa \cap E(G_3 \arr x, x \arr G_4) = \emptyset$.
			
			\item If $e \in E(G_3 \arr G_4)$, then the edges crossing $e$ are
			either incident to $s$, or incident to $x$, or in $E(G_3 \arr G_4)$. Since $\tw{G_3 \arr G_4} \le 1$, 
			we have that $|\kappa| \le 3$.
			Therefore, $\kappa \cap E(G_3 \arr G_4) = \emptyset$.
			
			\item If $e \in E(R_3 \arr x)$, then the edges crossing $e$ are in
			$E(s \arr G_3, t \arr R_3, R_3 \arr R_4)$.
			Observe that each of the three subsets contributes at most one edge to $\kappa$; thus, $|\kappa| \le 3$.
			Therefore, $\kappa \cap E(R_3 \arr x) = \emptyset$.
			
			\item If $e \in E(x \arr R_4)$, then the edges crossing $e$ are in
			$E(s \arr G_4, t \arr R_4, R_3 \arr R_4)$.
			Each of the three subsets contributes at most one edge to $\kappa$; thus, $|\kappa| \le 3$.
			Therefore, $\kappa \cap E(x \arr R_4) = \emptyset$.
			
			\item If $e \in E(R_3 \arr R_4)$, then the edges crossing $e$ are either adjacent
			to $s$ or $t$, or in $E(R_3 \arr R_4)$.
			Since $\tw{R_3 \arr R_4} \le 1$, we have that $|\kappa| \le 3$, contradicting our assumption.
		\end{itemize}   		
	\end{enumerate}
		
	\paragraph{Case 2.}
	Now assume $(s, x) \in E$ and $(x, t) \in E$; see \cref{fig:ss_odag_case2}.
	Again, $G$ is decomposed into two edge-disjoint single-source
	subgraphs sharing a single vertex $x$; denote the graph containing $(s, x)$ by $G^g$ and the graph containing
	$(x, t)$ by $G^r$, where $s$ is the single source of $G^g$ and $x$ is the single source of $G^r$. By the induction hypothesis, the two graphs admit orders $\sigma^g$ and $\sigma^r$ satisfying the invariant. 	
	Let $\sigma^g = [s, G_3, x, G_4]$ and $\sigma^r = [x, R_3, t, R_4]$. 
	Then 
	$\sigma = [s, G_3, x, G_4, R_3, t, R_4]$, where
	$H_3 = [G_3, x, G_4, R_3]$, $H_4 = R_4$; see \cref{fig:ss_odag_case2_ind}.	
	In \cref{sect:ss_odag_app} we show that the invariants are maintained.
\end{sketch} 

The recent result of Davies~\cite{d-ibccg-22} implies that the stack number of single-source outerplanar DAGs is at most $48$.
We reduce this upper bound on the stack number to $4$ via a similar argument that
employs the same recursive decomposition as in \cref{thm:ss_odag_twist}.
The proof of \cref{thm:ss_odag_stack} is in the appendix. 

\wormholeLm{lm-ss-odag-stack}
\begin{lemma}
	\label{thm:ss_odag_stack}
	Every single-source outerplanar DAG admits a $4$-stack layout.
\end{lemma}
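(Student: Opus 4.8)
The plan is to re-use, unchanged, the recursive decomposition and the vertex order $\sigma$ constructed in the proof of \cref{thm:ss_odag_twist}, and — by the very same induction — to equip it with a partition of $E$ into four stacks. The new ingredient is a strengthened invariant that fixes, up to a relabeling of the four stacks, where the ``interface'' edges of a single-source sub-oDAG with base edge $(s,t)$ and order $[s,H_3,t,H_4]$ live. A natural candidate: all edges incident to $s$ occupy two prescribed stacks; the set $E(s\arr H_4)\cup E(H_3\arr H_4)$ (which is pairwise non-crossing, as already recorded in the proof of \cref{thm:ss_odag_twist}) occupies one of these two; and all edges incident to $t$ but not to $s$ occupy the other two stacks. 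Every edge that is not of one of these types is inherited verbatim from the layout of $G^g$ or of $G^r$.

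I would first record a fact that is implicit in the proof of \cref{thm:ss_odag_twist}: in both Case~1 and Case~2 the vertex order of $G$ interleaves those of $G^g$ and of $G^r$ while preserving the relative order of the vertices inside each subgraph. Consequently the inductive $4$-stack layouts of $G^g$ and of $G^r$ stay crossing-free among their own edges under the combined order $\sigma$, so the only pairs that can become incompatible consist of one $G^g$-edge and one $G^r$-edge. The same case checks already carried out in \cref{thm:ss_odag_twist} then pin down the ``active'' edges that can participate in such a pair: on the $G^g$ side only the edges incident to $s$ and the at-most-one edge of $E(G_3\arr G_4)$, and on the $G^r$ side only the edges incident to $t$, the at-most-one edge of $E(R_3\arr R_4)$, and the edges of $E(R_3\arr x)\cup E(x\arr R_4)$ in Case~1, with a symmetric short list in Case~2. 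All remaining edges keep their stacks.

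The merge step keeps the stacks of every non-active edge and recombines only these $\Oh(1)$ active edges: after relabeling the four stacks of $G^r$ suitably (and, where needed, re-routing individual active edges onto stacks that are free for them) every crossing pair of a $G^g$-edge and a $G^r$-edge ends up on different stacks. Finally, the base edge of $G$ — which in the order $[s,H_3,t,H_4]$ crosses only the edges of $E(H_3\arr H_4)$ — is placed alongside the other source-incident edges but off the stack carrying $E(H_3\arr H_4)$, and one re-derives the strengthened invariant for the combined layout. This refines the bound of $7\times 3^2=63$ stacks obtained from \cref{thm:ss_odag_twist} via \cite{DM19} down to exactly $4$, and, being based on the same linear-time decomposition, yields a linear-time algorithm.

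The step I expect to be the main obstacle is choosing the invariant correctly: it must be strong enough that the active edges of $G^g$ and of $G^r$ can always be routed onto pairwise non-conflicting stacks during every merge, yet weak enough to be restored afterwards. This is delicate because the base edge, the apex vertex $x$, and the roles of ``source'' and ``other base endpoint'' are all permuted when passing from $G$ to $G^g$ and $G^r$ and when switching between the two cases, so the invariant must be re-derived separately and symmetrically in each case. As with \cref{thm:ss_odag_twist}, once the right invariant is in hand the remaining work is a finite — if tedious — verification over the handful of edge types listed above.
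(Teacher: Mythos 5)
Your overall strategy coincides with the paper's: keep the vertex order and recursion of \cref{thm:ss_odag_twist} and strengthen the induction with invariants that prescribe the stacks of the ``interface'' edges. But as written the proposal has a genuine gap: the invariant is only offered as a ``natural candidate,'' the merge step --- which is the entire content of the lemma --- is not verified, and the candidate itself does not survive the verification. Concretely, in Case~1 the invariant for $G$ would force all of $E(s \arr H_4)$ (i.e.\ every source edge except $(s,t)$) together with $E(H_3\arr H_4)$ into a single stack, while the induction for $G^g$ only guarantees that the source edges lie in \emph{two} stacks, one of them shared with $E(G_3 \arr G_4)$ and other uncontrolled internal edges. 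Edges of $E(s\arr G_3)$ can cross edges of $E(G_3\arr G_4)$, and edges of $E(s\arr G_4)$ can cross edges inside $G_4$, so the required consolidation of the source edges into one stack conflicts with edges whose placement the invariant does not constrain; you cannot finish the ``finite verification'' from the stated hypothesis. This is exactly why the paper's invariant is stronger and partitioned differently: all source edges $E(s\arr H_3, s\arr H_4)$ in a single stack $\Sh_1$, all edges at $t$ in $\Sh_2$, and $E(H_3\arr H_4)$ in a separate stack $\Sh_3$, after which the merge in each case is an explicit relabeling plus placement of the three new edges.

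Two further points. First, Case~2 is not ``symmetric'' to Case~1 and cannot be dismissed as such: there the set $E(H_3\arr H_4)=E(x\arr R_4, R_3\arr R_4)$ mixes edges that played different roles in $G^r$, and the paper needs an extra observation (either $E(x\arr R_4)=\emptyset$ or $E(R_3\arr R_4)=\emptyset$, i.e.\ either $E(s\arr H_4)=\emptyset$ or $H_3=\emptyset$ in the recursive order) to conclude that these edges fit in one stack; your proposal never addresses this. Second, two factual slips: $\tw{s\arr H_4, H_3\arr H_4}\le 1$ means these edges are pairwise non-crossing, not that $E(G_3\arr G_4)$ or $E(R_3\arr R_4)$ contains at most one edge, and consequently the ``active'' edges in the merge are $\Oh(1)$ many \emph{classes} but can be linearly many edges, so ``re-routing individual active edges onto free stacks'' is not a local, constant-size fix but must be argued class by class against all same-stack edges --- which is precisely the verification that is missing.
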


It is straightforward to extend \cref{thm:ss_odag_stack} to oDAGs with a constant number of sources (sinks),
that is, to construct a layout of an oDAG with $4s$ stacks, where $s$ is the number of sources (sinks) in the graph.
Partition the oDAG into $s$ single-source subgraphs and embed each of them in a separate set of $4$ stacks.

\subsection{Monotone oDAGs}
\label{sect:mon_odag}

Here we consider \df{monotone} outerplanar DAGs that are constructed from an edge by applying 
operations \ref{op2} and \ref{op3}. As in the previous section, we assume that the construction sequence
along with the base edge is known.

\wormholeLm{lm-odag-twist}
\begin{lemma}
	\label{thm:odag_twist}
	Every monotone outerplanar DAG admits an order whose twist size is at most $4$.
\end{lemma}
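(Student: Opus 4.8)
The plan is to mimic the inductive decomposition from the proof of \cref{thm:ss_odag_twist}, but now allowing all six parts of the vertex order $\sigma = [H_1, s, H_2, H_3, t, H_4]$ to be nonempty, since monotone operations \ref{op2} and \ref{op3} can create predecessors of $s$ (hence $H_1$) and successors placed on either side. First I would fix the base edge $(s,t)$ and state a strengthened invariant: there is a vertex order $\sigma = [H_1, s, H_2, H_3, t, H_4]$ with $E(H_2 \arr H_3) = \emptyset$ such that (I.1) the edges ``spanning'' the base edge, grouped appropriately (something like $\tw{H_1 \arr H_2, H_1 \arr t, H_1 \arr H_4, s \arr H_4, H_3 \arr H_4}$, i.e.\ the edges that pass over $s$ or over $t$), have twist size at most $2$; and (I.2) $\tw{E} \le 4$. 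The precise grouping in (I.1) is the delicate design choice — it must be weak enough to be provable by induction yet strong enough to control the twist when the two child layouts are merged. Judging from \cref{fig:invariants} and the parallel structure with the single-source case, I expect the right bookkeeping to track separately the ``left-reaching'' edges (those with an endpoint in $H_1$) and the ``right-reaching'' edges (those with an endpoint in $H_4$), each forming a star-like or twist-$\le 1$ bundle, so that together they contribute at most $2$ to any twist crossing the middle.

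Next I would set up the inductive step. Removing the unique common neighbour $x$ of $s$ and $t$ splits $G$ into two edge-disjoint monotone oDAGs $G^g$ (containing the $s$--$x$ edge) and $G^r$ (containing the $t$--$x$ edge), sharing only $x$. There are two cases by the operation used: \ref{op2} gives $(s,x),(t,x) \in E$, and \ref{op3} gives $(x,s),(x,t) \in E$. In each case $G^g$ and $G^r$ are again monotone oDAGs with known base edges ($x$ is the source or sink end of each), so by induction they have orders $\sigma^g$, $\sigma^r$ satisfying the invariant. I would then prescribe an explicit interleaving of the six parts of $\sigma^g$ and the six parts of $\sigma^r$ into the six parts of $\sigma$ — the analogue of ``$\sigma = [s, t, R_3, G_3, x, G_4, R_4]$'' from Case~1 above — chosen so that (a) $\sigma$ is a linear extension, (b) $E(H_2 \arr H_3) = \emptyset$ is preserved, and (c) the reaching-edge bundles of the two children nest correctly inside the new $H_1$ and $H_4$ so that (I.1) still holds. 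Getting an interleaving that simultaneously satisfies (a)--(c) in both operation cases (and their left/right mirror images) is the crux of the construction.

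Finally I would verify (I.2), the $\tw{E} \le 4$ bound, by the same contradiction argument as before: assume a $5$-twist $\kappa$, and case-split on where an edge $e \in \kappa$ lives relative to the part boundaries. If both endpoints of $e$ lie inside a single child's parts, then all of $\kappa$ lies in that child and $|\kappa| \le 4$ by induction. Otherwise $e$ straddles the boundary between the two children or crosses $s$ or $t$; then the edges crossing $e$ decompose into a bounded number of bundles (edges at $s$, edges at $t$, edges at $x$, plus the reaching bundles), and using (I.1) for the relevant child each bundle contributes at most what the invariant allows, summing to at most $4$ — contradiction. The main obstacle, and where I would spend the most care, is pinning down the exact statement of invariant (I.1) and the exact interleaving rule so that this final counting goes through with the constant $4$ rather than $5$; the single-source proof gets away with $3$ precisely because $H_1 = H_2 = \emptyset$, and here the presence of a nonempty $H_1$ (predecessors of $s$) is what forces the extra unit, so the bookkeeping must be tight enough not to lose a second unit.
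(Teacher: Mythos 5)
Your overall strategy is the same as the paper's: induct on the construction sequence, split $G$ at the common neighbour $x$ of the base edge into monotone children $G^g$ and $G^r$, handle \ref{op3} by reversing directions, interleave the children's six-part orders, and finish $\tw{E}\le 4$ by a case analysis on where an edge of a hypothetical $5$-twist lives. However, you explicitly defer the two points that constitute the actual proof -- the exact invariant system and the exact interleaving -- and the one candidate invariant you do float is not strong enough to close the argument. You propose a single bundle bound of the form $\tw{H_1 \arr H_2,\, H_1 \arr t,\, H_1 \arr H_4,\, s \arr H_4,\, H_3 \arr H_4}\le 2$ together with $\tw{E}\le 4$. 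With only a uniform bound of $2$ on spanning edges, the hardest case of the final count fails: when the $5$-twist contains an edge $e$ with both endpoints inside the merged right block (in the paper's notation, $e \in E(G_3 \arr G_4,\, G_3 \arr x,\, x \arr G_4)$ after the merge $\sigma = [G_1, s, G_2, R_1, t, R_2, G_3, R_3, x, R_4, G_4]$), the edges crossing $e$ split into one edge at $x$, a bundle from $G^g$, and a bundle from $G^r$; if each child bundle is only known to have twist at most $2$, you get $1+2+2=5$, not $4$. The paper escapes this precisely by maintaining a \emph{hierarchy} of bounds: \ref{I1} gives twist at most $1$ for the long edges $E(H_1 \cup \{s\} \arr \{t\} \cup H_4)$, while \ref{I2} and \ref{I3} give twist at most $2$ for the two one-sided families $E(H_1 \arr H_2 \cup \{t\} \cup H_4)$ and $E(H_1 \cup \{s\} \cup H_3 \arr H_4)$, so the count becomes $1+2+1$ or $1+2+2$ depending on the case, and never exceeds $4$.

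Two further omissions matter. First, the paper's structural invariant \ref{I0} is $E(H_1 \arr H_3) = E(H_2 \arr H_3) = E(H_2 \arr H_4) = \emptyset$, not just $E(H_2 \arr H_3) = \emptyset$; the two extra emptiness conditions are used repeatedly to identify which edge sets can appear in each crossing bundle after the merge. Second, the interleaving itself is not the ``obvious'' one: with $\sigma^g = [G_1, s, G_2, G_3, x, G_4]$ and $\sigma^r = [R_1, t, R_2, R_3, x, R_4]$ the paper places $R_1$ and $t$ \emph{between} $G_2$ and $G_3$ and nests $R_2, R_3, x, R_4$ inside the gap before $G_4$, i.e.\ $H_3 = R_1$ and $H_4 = [R_2, G_3, R_3, x, R_4, G_4]$; verifying that this is a linear extension and that \ref{I1}--\ref{I3} survive it is exactly the delicate bookkeeping you acknowledge you have not done. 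So while your plan points in the right direction, it stops short of the content of the lemma: as written, the invariant you propose cannot be both maintained and used to get the constant $4$, and no concrete merge rule is given to check it against.
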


\begin{sketch}
	We prove the claim by induction on the size of the given oDAG, $G=(V, E)$, by using
	the following invariants (see \cref{fig:odag_inv}):
	For a base edge $(s, t) \in E$, there exists a vertex order consisting of six parts,
	$\sigma = [H_1$, $s$, $H_2$, $H_3$, $t$, $H_4]$,
	such that the following holds:
	
	\begin{figure}[!tb]
		\center
		\includegraphics[page=1,height=70pt]{pics/odag}
		\caption{The invariant used in the proof of \cref{thm:odag_twist}}
		\label{fig:odag_inv}
	\end{figure}
	
\smallskip
	\noindent~~\begin{enumerate*}[label=\textbf{I.\arabic*},ref=I.\arabic*,itemjoin=~~]
		\item\label{I0} $E(H_1 \arr H_3) = E(H_2 \arr H_3) = E(H_2 \arr H_4) = \emptyset$ \\
		
		\item\label{I1} $\tw{H_1 \cup \{s\} \arr \{t\} \cup H_4} \le 1$ \\
		
		\item\label{I2} $\tw{H_1 \arr H_2 \cup \{t\} \cup H_4} \le 2$ \qquad\qquad

		\item\label{I3} $\tw{H_1 \cup \{s\} \cup H_3 \arr H_4} \le 2$ \\

		\item\label{I4} $\tw{E} \le 4$
	\end{enumerate*} 
\smallskip
	
	If $G$ consists of a single edge, then the base of the induction clearly holds. For the inductive case, 
	consider the base edge $(s, t)$ of $G$ and choose the unique common neighbor of $s$ and $t$, 
	denoted $x \in V$. Since $G$ is monotone and acyclic, there are
	two ways the edges between $x$ and $s, t$ are directed: either 
	$(s, x) \in E, (t, x) \in E$ (\ref{op2}) or $(x, s) \in E, (x, t) \in E$ (\ref{op3}).
	Observe that, since a (monotone) outerplanar DAG remains (monotone) outerplanar after reversing all 
	edge directions and the described invariants are symmetric with respect to parts $H_1, H_2$ and parts $H_3, H_4$, 
	it is sufficient to study only one of the two cases. Therefore we investigate
	the former case, while the latter case follows from the symmetry.
	
	\begin{figure}[!tb]
		\center
		\includegraphics[page=2,width=\linewidth]{pics/odag}
		\caption{An inductive step used in the proof of \cref{thm:odag_twist}}
		\label{fig:odag_case1}
	\end{figure}
	
	Assume $(s, x) \in E$ and $(t, x) \in E$. It is easy to see that $G$ is decomposed into two edge-disjoint monotone
	oDAGs sharing a vertex $x \in V$; denote the graph containing $(s, x)$ by $G^g$ and the graph containing
	$(t, x)$ by $G^r$. By the induction hypothesis, the two graphs admit orders $\sigma^g$ and $\sigma^r$ satisfying the
	described invariant. 	
	Let $\sigma^g = [G_1, s, G_2, G_3, x, G_4]$ and $\sigma^r = [R_1, t, R_2, R_3, x, R_4]$. Then
\[
	\sigma = [G_1, s, G_2, R_1, t, R_2, G_3, R_3, x, R_4, G_4]
\] 
where $H_1 = G_1$, $H_2 = G_2$, $H_3 = R_1$, $H_4 = [R_2, G_3, R_3, x, R_4, G_4]$ in~$\sigma$; see \cref{fig:odag_case1}.
	In \cref{sect:mon_odag_app} we show that the invariants are maintained under $\sigma$.
\end{sketch}

\subsection{Outerpath DAGs}
\label{sect:outerpath_dag}

Let $G$ be an embedded (plane) graph. Recall that a \df{weak dual}
is a graph whose vertices are bounded faces of $G$ and edges connect adjacent faces of $G$.
A graph is an \df{outerpath} if its weak dual is a path.
Consider a face of an outerpath $G=(V, E)$ that corresponds to a terminal of the path, and 
make an edge on the face adjacent to the outerface of $G$ to be a base edge.
It is easy to see that
every outerpath can be constructed from such a base edge by repeatedly stellating edges
such that the following holds (which keeps the weak dual to be a path):
After stellating edge $(u, v)$ with a vertex $w$, only one of the two newly added edges, $\{u, w\}$ and $\{v, w\}$,
can be further stellated. In order to construct an outerpath DAG, the directions of the edges have
to follow one of the operations, \ref{op1}, \ref{op2}, or \ref{op3}.

\wormholeLm{lm-outerpath-twist}
\begin{lemma}
	\label{thm:outerpath_twist}
	Every outerpath DAG admits an order whose twist size is at most~$4$.
\end{lemma}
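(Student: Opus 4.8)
The plan is to mirror the inductive schemes of Sections~\ref{sect:ss_odag} and~\ref{sect:mon_odag}: decompose the outerpath DAG along the stellation sequence, recurse on the two pieces, and interleave the resulting vertex orders into a global order of the form $\sigma=[H_1,s,H_2,H_3,t,H_4]$ with $E(H_2\arr H_3)=\emptyset$, so that a small list of invariants about twist sizes of structured edge subsets is preserved, the last of which is $\tw{E}\le 4$. The crucial structural simplification available here, and absent for general monotone or single-source oDAGs, is the outerpath condition: after stellating the base edge $(s,t)$ with its unique common neighbour $x$, only \emph{one} of the two new edges $\{s,x\}$, $\{t,x\}$ is further stellated. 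Hence at each inductive step exactly one of the two subgraphs $G^g$, $G^r$ is again a (nontrivial) outerpath DAG, while the other is a single edge (or, after augmenting to maximal, a fan/triangle-path that is itself a single-source or single-sink oDAG handled by Lemma~\ref{thm:ss_odag_twist}). This asymmetry means the recursion only branches ``along one side,'' which should let us get away with the same twist bound $4$ rather than something larger.

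First I would fix the base edge by choosing a terminal face of the weak-dual path and an edge of that face on the outerface, as described just before the statement. Then I would set up the induction on $|V|$ with an invariant package: a decomposition $\sigma=[H_1,s,H_2,H_3,t,H_4]$, the emptiness conditions $E(H_1\arr H_3)=E(H_2\arr H_3)=E(H_2\arr H_4)=\emptyset$ as in~\ref{I0}, together with bounds of the shape $\tw{H_1\cup\{s\}\arr\{t\}\cup H_4}\le 1$, $\tw{H_1\arr H_2\cup\{t\}\cup H_4}\le 2$, $\tw{H_1\cup\{s\}\cup H_3\arr H_4}\le 2$, and $\tw{E}\le 4$ — essentially the invariants \ref{I0}--\ref{I4} of Lemma~\ref{thm:odag_twist}, which are already symmetric under edge reversal and under swapping $(H_1,H_2)\leftrightarrow(H_3,H_4)$. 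For the base case ($G$ a single edge, or a single stellated triangle) the invariants hold trivially. For the inductive step I would split $G$ at the common neighbour $x$ of $s,t$ into $G^g$ (containing the edge at $s$) and $G^r$ (containing the edge at $t$), note that the outerpath property forces one of them to be ``small'' and the other to be an outerpath DAG to which the induction hypothesis applies, and then, exactly as in the proof of Lemma~\ref{thm:odag_twist}, write down the interleaved order — something like $\sigma=[G_1,s,G_2,R_1,t,R_2,G_3,R_3,x,R_4,G_4]$ with $H_1=G_1$, $H_2=G_2$, $H_3=R_1$, $H_4=[R_2,G_3,R_3,x,R_4,G_4]$ — handling the sub-cases determined by which of \ref{op1}, \ref{op2}, \ref{op3} was applied and which side carries the recursion. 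Each invariant is then re-verified by rewriting the relevant edge set as a disjoint union of pieces lying entirely inside $G^g$ or inside $G^r$ plus at most one or two edges incident to $s$, $t$, or $x$, and invoking the appropriate inductive bound on each piece.

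The main obstacle I anticipate is the verification of the top-level bound $\tw{E}\le 4$ in the case where the recursion side carries a transitive stellation \ref{op1} at the base edge, since then both $(s,x)$ and, say, $(x,t)$ are present with $x$ placed deep inside $H_4$, and a putative $5$-twist $\kappa$ with a ``core'' edge $e\in E(H_4\arr H_4)$ must be excluded by a careful case analysis on which part-to-part class $e$ falls into (an analysis of the same flavour as the bulleted cases in the proofs of Lemmas~\ref{thm:ss_odag_twist} and~\ref{thm:odag_twist}). The bookkeeping is tedious but routine: in every sub-case the edges crossing $e$ are confined either to a single recursive subgraph (where $\tw{E}\le4$ by hypothesis) or to a union of two structured families each of twist size $\le 2$ (or $\le1$) plus at most one edge at $x$, so $|\kappa|\le 4$ follows. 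I would also need to double-check that the ``only one new edge is re-stellated'' hypothesis genuinely prevents the pathological configuration underlying Figure~\ref{fig:ns} — i.e.\ that it rules out a linear-size nested/twisted family — and that augmenting a partial outerpath DAG to a maximal one keeps it an outerpath DAG so that the ``assume maximal'' convention of the section applies; both are straightforward but worth stating explicitly before launching the induction.
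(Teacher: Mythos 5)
The framework you propose is indeed the paper's framework: induction along the stellation sequence, splitting at the unique common neighbour $x$ of the base edge, a six-part order $[H_1,s,H_2,H_3,t,H_4]$, and maintenance of twist invariants, exploiting that for an outerpath one of the two subgraphs at each step is a single edge. But the substance of such a proof is the choice of invariants, and there your proposal has a genuine gap: you plan to carry the monotone-oDAG invariants \ref{I0}--\ref{I4} of \cref{thm:odag_twist} verbatim, yet those are tailored to graphs built only with \ref{op2} and \ref{op3}, and there is no reason they survive the transitive operation \ref{op1}. Concretely, \ref{I0} demands $E(H_1\arr H_3)=E(H_2\arr H_4)=\emptyset$; under the natural placement of the lone new vertex in the case $(s,x),(t,x)\in E$ with the recursion continuing on $(s,x)$ (insert $t$ between $G_2$ and $G_3$), every edge of $E(G_2\arr x)$ lands in $E(H_2\arr H_4)$, and such edges do arise for outerpaths once a transitive stellation has occurred inside $G^g$ --- the paper's invariant \ref{p:I2a} exists precisely to control them. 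The paper's actual proof therefore abandons your package and maintains a different list of nine invariants \ref{p:I0}--\ref{p:I7}: only $E(H_2\arr H_3)=\emptyset$ is kept (via $H_2=\emptyset$ or $H_3=\emptyset$), and several new edge families with twist bounds $1$, $2$ and $3$ are introduced; moreover the four cases 1a/1b/2a/2b use case-specific placements of the new vertex ($t$ between $G_2$ and $G_3$ in Case 1a, but after $G_4$ in Case 2a), not the single monotone interleaving you quote, which is only defined for \ref{op2}/\ref{op3}.

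So the missing piece is exactly the heart of the proof: you neither specify the combined orders for the subcases involving \ref{op1}, nor verify that any invariant package is maintained in them; the phrase ``tedious but routine'' covers precisely the step where your invariants break or, at best, where you would have to argue that the $H_2/H_3$ boundary can always be re-chosen to rescue \ref{I0} --- something you do not attempt and which the paper sidesteps by redesigning the invariants. A smaller inaccuracy: in the maximal-outerpath construction the non-recursed side is always a single edge (the second new edge of a stellation is never stellated again), so there is no ``fan'' subcase, and you could not simply invoke \cref{thm:ss_odag_twist} for such a piece anyway, since its invariant is incompatible with whatever invariant you maintain for the outerpath recursion.
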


\begin{figure}[!bt]
	\begin{subfigure}[b]{.24\linewidth}
		\center
		\includegraphics[page=4,width=\linewidth]{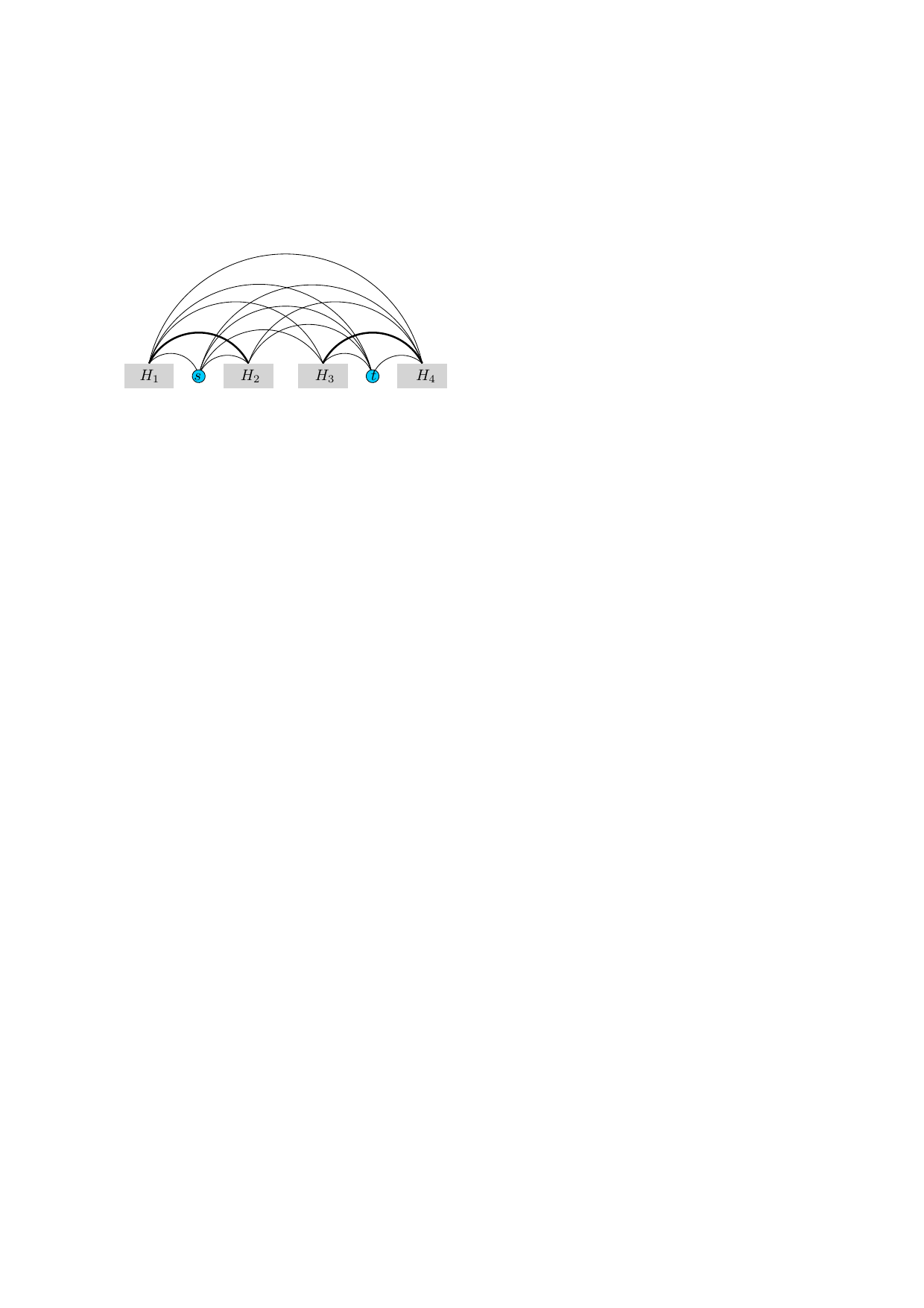}
		\caption{Case~1a}
		\label{fig:outerpath_case1a}
	\end{subfigure}    
	\hfill
	\begin{subfigure}[b]{.24\linewidth}
		\center
		\includegraphics[page=5,width=\linewidth]{pics/outerpath}
		\caption{Case~1b}
		\label{fig:outerpath_case1b}
	\end{subfigure}   
	\hfill
	\begin{subfigure}[b]{.24\linewidth}
		\center
		\includegraphics[page=2,width=\linewidth]{pics/outerpath}
		\caption{Case~2a}
		\label{fig:outerpath_case2a}
	\end{subfigure}   
	\hfill
	\begin{subfigure}[b]{.24\linewidth}
		\center
		\includegraphics[page=3,width=\linewidth]{pics/outerpath}
		\caption{Case~2b}
		\label{fig:outerpath_case2b}
	\end{subfigure}   
	\caption{Cases in \cref{thm:outerpath_twist}: stellating base edge $(s, t)$ with a vertex $x$}
	\label{fig:outerpath_cases}
\end{figure}

\begin{sketch}
	We prove the claim by induction on the size of the given outerpath DAG, $G=(V, E)$, by using
	the following invariants:
	For a base edge $(s, t) \in E$, there exists a vertex order
	consisting of six parts, $\sigma = [H_1$, $s$, $H_2$, $H_3$, $t$, $H_4]$, 
	such that the following holds:
	
\smallskip
	\noindent\begin{enumerate*}[label=\textbf{I.\arabic*},ref=I.\arabic*,itemjoin=]
		\item\label{p:I0} $H_2 = \emptyset$ or $H_3 = \emptyset$, that is, $E(H_2 \arr H_3) = \emptyset$ \\
		
		\item\label{p:I2a} $\tw{H_2 \arr t, H_2 \arr H_4} \le 1$ ~~~~\qquad\qquad
		
		\item\label{p:I2b} $\tw{H_1 \arr H_3, s \arr H_3} \le 1$ \\
		
		\item\label{p:I3} $\tw{H_1 \cup \{s\} \cup H_2 \arr H_3 \cup \{t\} \cup H_4} \le 2$ \\
		
		\item\label{p:I4a} $\tw{H_1 \arr H_2, H_2 \arr \{t\} \cup H_4} \le 3$~~~
		
		\item\label{p:I4b} $\tw{H_1 \cup \{s\} \arr H_3, H_3 \arr H_4} \le 3$ \\
		
		\item\label{p:I5} $\tw{H_1 \cup \{s\} \cup H_2 \cup H_3 \arr H_4} \!\le\! 3$~~

		\item\label{p:I6} $\tw{H_1 \arr H_2 \cup H_3 \cup \{t\} \cup H_4} \!\le\! 3$ \\
		
		\item\label{p:I7} $\tw{E} \le 4$
	\end{enumerate*} 
\smallskip			
			
	If $G$ consists of a single edge, then the base of the induction clearly holds. 
	For the inductive case, consider
	a base edge $(s, t) \in E$ and choose the unique common neighbor of $s$ and $t$, 
	denoted $x \in V$. Although all three operations can be applied on $(s, t)$,
	by symmetry, it is sufficient to study only \ref{op1} and \ref{op2}. Depending on which edges of face
	$\langle s, t, x \rangle$ are utilized for the construction, we distinguish four cases; see \cref{fig:outerpath_cases}.
	As in earlier proofs we denote the graphs constructed on $(s, x)$ by $G^g$ and the graph on $(t, x)$ by $G^r$, and
	assume the graphs admit orders $\sigma^g$ and $\sigma^r$ satisfying the invariants.
	Notice however that, since $G$ is an outerpath, only one of $G^g, G^r$ contains more than two vertices.
	
	\begin{figure}[!t]
		\begin{subfigure}[b]{.48\linewidth}
			\center
			\includegraphics[page=6,width=\linewidth]{pics/outerpath}
			\caption{Case~1a}
			\label{fig:outerpath_1a}
		\end{subfigure}    
		\hfill
		\begin{subfigure}[b]{.48\linewidth}
			\center
			\includegraphics[page=7,width=\linewidth]{pics/outerpath}
			\caption{Case~1b}
			\label{fig:outerpath_1b}
		\end{subfigure}   
		
		\begin{subfigure}[b]{.48\linewidth}
			\center
			\includegraphics[page=8,width=\linewidth]{pics/outerpath}
			\caption{Case~2a}
			\label{fig:outerpath_2a}
		\end{subfigure}    
		\hfill
		\begin{subfigure}[b]{.48\linewidth}
			\center
			\includegraphics[page=9,width=\linewidth]{pics/outerpath}
			\caption{Case~2b}
			\label{fig:outerpath_2b}
		\end{subfigure}   
		\caption{An illustration for \cref{thm:outerpath_twist}}
	\end{figure}
		
	\paragraph{Case 1a.} 	
	Assume that $(s, x) \in E$, $(t, x) \in E$, $\sigma^g = [G_1, s, G_2, G_3, x, G_4]$, and $\sigma^r = [t, x]$.
	We set 
	\(\sigma = [G_1, s, G_2, t, G_3, x, G_4]\), where
	$H_1 = G_1$, $H_2 = G_2$, $H_3 = \emptyset$, $H_4 = [G_3, x, G_4]$ in $\sigma$; see \cref{fig:outerpath_1a}.
		
	\paragraph{Case 1b.} 	
	Assume $(s, x) \in E$, $(t, x) \in E$, $\sigma^g = [s, x]$, and $\sigma^r = [R_1, t, R_2, R_3, x, R_4]$.
	We set 
	\(\sigma = [s, R_1, t, R_2, R_3, x, R_4]\), where
	$H_1 = H_2 = \emptyset$, $H_3 = R_1$, $H_4 = [R_2, R_3, x, R_4]$ in $\sigma$; see \cref{fig:outerpath_1b}.
		
	\paragraph{Case 2a.} 
	Assume $(s, x) \in E$, $(x, t) \in E$, $\sigma^g = [G_1, s, G_2, G_3, x, G_4]$, and $\sigma^r = [x, t]$.
	We set 
	\(\sigma = [G_1, s, G_2, G_3, x, G_4, t]\), where
	$H_1 = G_1$, $H_2 = [G_2, G_3, x, G_4]$, $H_3 = H_4 = \emptyset$ in $\sigma$; see \cref{fig:outerpath_2a}.	
		
	\paragraph{Case 2b.} 
	Assume $(s, x) \in E$, $(x, t) \in E$, $\sigma^g = [s, x]$, and \(\sigma^r = [R_1, x, R_2, R_3, t, R_4]\).
	The case is reduced to \emph{Case~2a} by reversing all edge directions; see \cref{fig:outerpath_2b}.
	
	\noindent\cref{sect:outerpath_dag_app} shows that the invariants are maintained in each of the cases.
\end{sketch}

\section{Upward Planar $3$-Trees}
\label{sect:up3t}

\begin{backInTimeThm}{thm-up3t}
	\begin{theorem}
		Every upward planar $3$-tree admits an order whose twist size is at most $5$.
	\end{theorem} 
\end{backInTimeThm}

\begin{sketch}
	We prove the claim by induction on the size of a given upward planar $3$-tree, $G=(V, E)$, by using
	the following invariants (see \cref{fig:up3t_inv}):
	For the outerface $\langle s, m, t \rangle$ of $G$, there exists a vertex order 
	consisting of five parts, $\sigma = [s, H_1, m, H_2, t]$, where $H_1, H_2 \subset V$, 
	and the following holds:
	
	\smallskip
	\begin{enumerate*}[label=\textbf{I.\arabic*},ref=I.\arabic*]
		\item\label{up3t:I1} $\tw{\{s\} \cup H_1 \arr H_2 \cup \{t\}} \le 2$ \qquad\qquad
		
		\item\label{up3t:I2} $\tw{E} \le 5$
	\end{enumerate*} 
	\smallskip

	The base of the induction clearly holds when $G$ is a triangle.
	For the inductive case, consider the outerface, $\langle s, m, t \rangle$, of $G$
	and identify the unique vertex, $x \in V$, adjacent to $s, m, t$. Since $G$ is upward planar, 
	we have $(s, x) \in E$, $(x, t) \in E$; for the direction of the edge between $x$ and $m$, there are
	two possible cases. 
	We can reduce
	one case to another one by reversing edge directions, which preserves upward
	planarity of the graph. Therefore, we study only one of the cases.

	\begin{figure}[!tb]
		\begin{subfigure}[b]{.55\linewidth}
			\center
			\includegraphics[page=1,height=70pt]{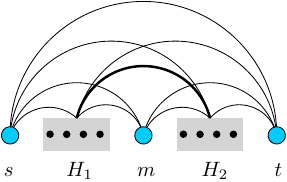}
			\caption{The invariant used in the proof of \cref{thm:up3t}}
			\label{fig:up3t_inv}
		\end{subfigure}    
		\hfill
		\begin{subfigure}[b]{.42\linewidth}
			\center
			\includegraphics[page=2,height=80pt]{pics/up3t}
			\caption{Decomposing an upward planar 3-tree into $G^{r}$, $G^{g}$, and $G^{b}$}
			\label{fig:up3t_dec}
		\end{subfigure} 
		\caption{Bounding the twist size of upward planar 3-trees}
	\end{figure}
	
	Assume $(x, m) \in E$. Then $G$ is decomposed into three upward planar subgraphs
	bounded by faces $\langle s, x, t \rangle$, $\langle s, x, m \rangle$, and $\langle x, m, t \rangle$;
	denote the graphs by $G^{r}$, $G^{g}$, and $G^{b}$, respectively; see \cref{fig:up3t_dec}. 
	By the induction hypothesis, the three graphs admit orders $\sigma^{r}, \sigma^{g}, \sigma^{b}$ satisfying the
	described invariants. 
	Let $\sigma^{r} = [s, R_1, x, R_2, t]$, $\sigma^{g} = [s, G_1, x, G_2, m]$, and $\sigma^{b} = [x, B_1, m, B_2, t]$. 
	Then
\[
	\sigma = [s, R_1, G_1, x, G_2, R_2, B_1, m, B_2, t]
\] 
	where \(H_1 = [R_1, G_1, x, G_2, R_2, B_1]$ and $H_2 = B_2\); see \cref{fig:up3t_case1} in \cref{sect:up3t_app}, where we show that the invariants are maintained under the vertex order.	
\end{sketch}

\section{Lower Bounds}
\label{sect:lower}

We  construct and computationally verify specific graphs that require a minimum number of stacks in every layout
utilizing a SAT formulation of the linear layout problem~\cite{BKKPRU20,Pup20}.
Using a modern SAT solver, one can evaluate small and medium size 
instances (up to a few hundred of vertices) within a few seconds.
An online tool and the source code of the implementation is available at~\cite{bob}.

Using the formulation, we identified a single-source single-sink upward oDAG that requires
three stacks; see \cref{fig:3out}. There are only two linear extensions of the graph, 
$[a, b, c, d, e, f]$ and $[a, b, d, c, e, f]$, and both require three stacks.

Next we found an upward outerplanar DAG with four sources and three sinks whose stack number is $4$; see \cref{fig:4out}.
This oDAG is upward but not monotone, that is, it requires an addition of transitive edges via operation \ref{op1}.

Finally, we construct an upward planar $3$-tree that requires five stacks; see \cref{fig:5st-1}.
The results are summarized in \cref{thm:other}.

\begin{figure}[!tb]
	\begin{subfigure}[b]{.48\linewidth}
		\center
		\includegraphics[page=1,height=100pt]{pics/lower_bounds}
		\caption{A singles-source single-sink outerplanar DAG that requires three stacks}
		\label{fig:3out}
	\end{subfigure}    
	\hfill
	\begin{subfigure}[b]{.48\linewidth}
		\center
		\includegraphics[page=7,height=160pt]{pics/lower_bounds}
		\caption{An upward outerplanar DAG that requires four stacks}
		\label{fig:4out}
	\end{subfigure}    
	\caption{Lower bound examples}
\end{figure}

\section{Conclusions}

In this paper we studied the stack number of upward planar and outerplanar DAGs and provided improved upper and lower bounds for some interesting subclasses via their maximum twist sizes. With the recent results of Jungeblut et al.~\cite{jmu-daoghcsn-22} one of the intriguing open questions is to decrease the gap between our lower bound of $4$ and their upper bound of $24776$ for oDAGs. Moreover, since our upper bounds are mostly based on bounding the twist number, they are likely too large and it would be interesting to decrease them further. 

A \df{queue layout} of DAGs is a related concept, in which a pair of edges cannot nest. %
While two queues are sufficient for trees and unicyclic DAGs~\cite{HPT99a}, there exist single-source single-sink upward oDAGs
that require a linear number of queues; see \cref{fig:nq}. This is in contrast with undirected planar graphs, which have
a constant queue number~\cite{ABGKP18,DJMMUW19}. 
We suggest to investigate mixed stack-queue layouts in which every page
is either a stack or a queue~\cite{Pup17,CKN19,abkm-mllsg-22}.
Another direction is to parameterize the queue number by a graph parameter that is tied to the queue number 
for undirected graphs,
such as the width of a poset~\cite{ABGKP20,Pup22}.
	
\begin{figure}[!tb]
	\begin{subfigure}[b]{.62\linewidth}
		\center
		\includegraphics[page=8,height=140pt]{pics/lower_bounds}
		\caption{}
		\label{fig:5st-1}
	\end{subfigure}    
	\hfill
	\begin{subfigure}[b]{.34\linewidth}
		\center
		\includegraphics[page=4,height=110pt]{pics/lower_bounds}
		\caption{}
		\label{fig:nq}
	\end{subfigure}    
	\caption{(a)~An upward planar DAG that require $5$ stacks. 
		(b)~An upward outerplanar DAG that requires $n/2$ queues.}
	\label{fig:lb2}
\end{figure}

\paragraph*{Acknowledgments.}	
We thank the organizers and other participants of the 2019 Dagstuhl seminar 
``Beyond-Planar Graphs: Combinatorics, Models and Algorithms'', where this work started, 
in particular F.~Frati and T.~Mchedlidze.

\bibliographystyle{splncs04}
\bibliography{refs}

\begin{thebibliography}{10}
\providecommand{\url}[1]{\texttt{#1}}
\providecommand{\urlprefix}{URL }
\providecommand{\doi}[1]{https://doi.org/#1}

\bibitem{A96}
Ageev, A.A.: A triangle-free circle graph with chromatic number 5. Discrete
  Mathematics  \textbf{152}(1-3),  295--298 (1996).
  \doi{10.1016/0012-365X(95)00349-2}

\bibitem{ABGKP18}
Alam, J.M., Bekos, M.A., Gronemann, M., Kaufmann, M., Pupyrev, S.: Queue
  layouts of planar 3-trees. Algorithmica  \textbf{82}(9),  2564--2585 (2020).
  \doi{10.1007/s00453-020-00697-4}

\bibitem{ABGKP20}
Alam, J.M., Bekos, M.A., Gronemann, M., Kaufmann, M., Pupyrev, S.: Lazy queue
  layouts of posets. Algorithmica  \textbf{85}(5),  1176--1201 (2023).
  \doi{10.1007/s00453-022-01067-y}

\bibitem{AJZ15}
Alhashem, M., Jourdan, G., Zaguia, N.: On the book embedding of ordered sets.
  Ars Comb.  \textbf{119},  47--64 (2015)

\bibitem{AR96}
Alzohairi, M., Rival, I.: Series-parallel planar ordered sets have pagenumber
  two. In: North, S.C. (ed.) International Symposium on Graph Drawing.
  vol.~1190, pp. 11--24. Springer (1996). \doi{10.1007/3-540-62495-3\_34}

\bibitem{abkm-mllsg-22}
Angelini, P., Bekos, M.A., Kindermann, P., Mchedlidze, T.: On mixed linear
  layouts of series-parallel graphs. Theoretical Computer Science
  \textbf{936},  129--138 (2022). \doi{10.1016/j.tcs.2022.09.019}

\bibitem{Bek19}
Bekos, M.A., et~al.: On linear layouts of planar and k-planar graphs. In: Hong,
  S.H., Kaufmann, M., Pach, J., T{\'o}th, C.D. (eds.) Beyond-Planar Graphs:
  Combinatorics, Models and Algorithms, vol.~167, p.~144. Dagstuhl Reports
  (2019)

\bibitem{blfgmr-rdwpn-23}
Bekos, M.A., Da~Lozzo, G., Frati, F., Gronemann, M., Mchedlidze, T.,
  Raftopoulou, C.N.: Recognizing {DAG}s with page-number 2 is {NP}-complete.
  Theoretical Computer Science  \textbf{946},  113689 (2023).
  \doi{10.1016/j.tcs.2023.113689}

\bibitem{BKKPRU20}
Bekos, M.A., Kaufmann, M., Klute, F., Pupyrev, S., Raftopoulou, C.N., Ueckerdt,
  T.: Four pages are indeed necessary for planar graphs. Journal of Computation
  Geometry  \textbf{11}(1),  332--353 (2020). \doi{10.20382/jocg.v11i1a12}

\bibitem{bdmn-ubtpccr-22}
Bhore, S., Da~Lozzo, G., Montecchiani, F., Nöllenburg, M.: On the upward book
  thickness problem: Combinatorial and complexity results. European J.
  Combinatorics  \textbf{110},  103662 (2023). \doi{10.1016/j.ejc.2022.103662}

\bibitem{BLGDMP19}
Binucci, C., Da~Lozzo, G., Di~Giacomo, E., Didimo, W., Mchedlidze, T.,
  Patrignani, M.: Upward book embeddings of st-graphs. In: Barequet, G., Wang,
  Y. (eds.) International Symposium on Computational Geometry. LIPIcs,
  vol.~129, pp. 13:1--13:22. Schloss Dagstuhl - Leibniz-Zentrum f{\"{u}}r
  Informatik (2019). \doi{10.4230/LIPIcs.SoCG.2019.13}

\bibitem{CKN19}
de~Col, P., Klute, F., N{\"o}llenburg, M.: Mixed linear layouts: Complexity,
  heuristics, and experiments. In: Archambault, D., T{\'{o}}th, C.D. (eds.)
  International Symposium on Graph Drawing and Network Visualization. Lecture
  Notes in Computer Science, vol. 11904, pp. 460--467. Springer (2019).
  \doi{10.1007/978-3-030-35802-0\_35}

\bibitem{d-ibccg-22}
Davies, J.: Improved bounds for colouring circle graphs. Proc. American
  Mathematical Society  \textbf{150}(12),  5121--5135 (2022).
  \doi{10.1090/proc/16044}

\bibitem{GGLW06}
Di~Giacomo, E., Didimo, W., Liotta, G., Wismath, S.K.: Book embeddability of
  series-parallel digraphs. Algorithmica  \textbf{45}(4),  531--547 (2006).
  \doi{10.1007/s00453-005-1185-7}

\bibitem{DJMMUW19}
Dujmovi{\'c}, V., Joret, G., Micek, P., Morin, P., Ueckerdt, T., Wood, D.R.:
  Planar graphs have bounded queue-number. J. {ACM}  \textbf{67}(4),
  22:1--22:38 (2020). \doi{10.1145/3385731}

\bibitem{FFR13}
Frati, F., Fulek, R., Ruiz-Vargas, A.J.: On the page number of upward planar
  directed acyclic graphs. Journal of Graph Algorithms and Applications
  \textbf{17}(3),  221--244 (2013). \doi{10.7155/jgaa.00292}

\bibitem{HP97}
Heath, L.S., Pemmaraju, S.V.: Stack and queue layouts of posets. SIAM Journal
  on Discrete Mathematics  \textbf{10}(4),  599--625 (1997).
  \doi{10.1137/S0895480193252380}

\bibitem{HPT99b}
Heath, L.S., Pemmaraju, S.V.: Stack and queue layouts of directed acyclic
  graphs: Part {II}. SIAM Journal on Computing  \textbf{28}(5),  1588--1626
  (1999). \doi{10.1137/S0097539795291550}

\bibitem{HPT99a}
Heath, L.S., Pemmaraju, S.V., Trenk, A.N.: Stack and queue layouts of directed
  acyclic graphs: Part {I}. SIAM Journal on Computing  \textbf{28}(4),
  1510--1539 (1999). \doi{10.1137/S0097539795280287}

\bibitem{H89}
Hung, L.: A planar poset which requires 4 pages. Ars Combinatoria  \textbf{35},
   291--302 (1993)

\bibitem{jmu-sbpnupg-22}
Jungeblut, P., Merker, L., Ueckerdt, T.: A sublinear bound on the page number
  of upward planar graphs. In: Symposium on Discrete Algorithms. pp. 963--978.
  {SIAM} (2022). \doi{10.1137/1.9781611977073.42}

\bibitem{jmu-daoghcsn-22}
Jungeblut, P., Merker, L., Ueckerdt, T.: Directed acyclic outerplanar graphs
  have constant stack number. In: Foundations of Computer Science (2023),
  \url{https://arxiv.org/abs/2211.04732}, to appear

\bibitem{K88}
Kostochka, A.: Upper bounds on the chromatic number of graphs. Trudy Inst. Mat.
  (Novosibirsk)  \textbf{10}(Modeli i Metody Optim.),  204--226 (1988)

\bibitem{KK97}
Kostochka, A., Kratochv{\'\i}l, J.: Covering and coloring polygon-circle
  graphs. Discrete Mathematics  \textbf{163}(1-3),  299--305 (1997).
  \doi{10.1016/S0012-365X(96)00344-5}

\bibitem{MS09}
Mchedlidze, T., Symvonis, A.: Crossing-free acyclic {H}amiltonian path
  completion for planar st-digraphs. In: International Symposium on Algorithms
  and Computation. Lecture Notes in Computer Science, vol.~5878, pp. 882--891.
  Springer (2009). \doi{10.1007/978-3-642-10631-6\_89}

\bibitem{M20}
Merker, L.: Ordered covering numbers. Masters thesis, Karlsruhe Institute of
  Technology (2020)

\bibitem{NP89}
Nowakowski, R., Parker, A.: Ordered sets, pagenumbers and planarity. Order
  \textbf{6}(3),  209--218 (1989). \doi{10.1007/BF00563521}

\bibitem{Pap94}
Papakostas, A.: Upward planarity testing of outerplanar {DAG}s. In: Tamassia,
  R., Tollis, I.G. (eds.) International Symposium on Graph Drawing. Lecture
  Notes in Computer Science, vol.~894, pp. 298--306. Springer (1994).
  \doi{10.1007/3-540-58950-3\_385}

\bibitem{bob}
Pupyrev, S.: A {SAT}-based solver for constructing optimal linear layouts of
  graphs, source code available at \url{https://github.com/spupyrev/bob}

\bibitem{Pup17}
Pupyrev, S.: Mixed linear layouts of planar graphs. In: Frati, F., Ma, K.
  (eds.) International Symposium on Graph Drawing and Network Visualization.
  Lecture Notes in Computer Science, vol. 10692, pp. 197--209. Springer (2017).
  \doi{10.1007/978-3-319-73915-1\_17}

\bibitem{Pup20}
Pupyrev, S.: Improved bounds for track numbers of planar graphs. Journal of
  Graph Algorithms and Applications  \textbf{24}(3),  323--341 (2020).
  \doi{10.7155/jgaa.00536}

\bibitem{Pup22}
Pupyrev, S.: Queue layouts of two-dimensional posets. In: Angelini, P., von
  Hanxleden, R. (eds.) International Symposium on Graph Drawing and Network
  Visualization. Lecture Notes in Computer Science, vol. 13764, pp. 353--360.
  Springer (2022). \doi{10.1007/978-3-031-22203-0\_25}

\bibitem{Sys89}
Sys{\l}o, M.: Bounds to the page number of partially ordered sets. In: Nagl, M.
  (ed.) Workshop on Graph-Theoretic Concepts in Computer Science. Lecture Notes
  in Computer Science, vol.~411, pp. 181--195. Springer (1989).
  \doi{10.1007/3-540-52292-1\_13}

\bibitem{Yan89}
Yannakakis, M.: Embedding planar graphs in four pages. Journal of Computer and
  System Sciences  \textbf{38}(1),  36--67 (1989).
  \doi{10.1016/0022-0000(89)90032-9}

\end{thebibliography}
	
\clearpage
\appendix
\chapter*{\appendixname}

\section{Complete Proofs for Section~3}

\subsection{Single-Source oDAGs}
\label{sect:ss_odag_app}

Here we provide a complete proof of \cref{thm:ss_odag_twist}.

\begin{backInTimeLm}{lm-ss-odag-twist}
\begin{lemma}
	Every single-source (single-sink) outerplanar DAG admits an order whose twist size is at most $3$.
\end{lemma}
\end{backInTimeLm}

\begin{proof}
	Let $G = (V, E)$ be a given oDAG with a unique source $s \in V$, and assume that $(s, t) \in E$ is 
	the base edge in the construction sequence of $G$.	
	We prove the claim by induction on the size of $G$ by using
	the following invariant (see \cref{fig:ss_odag_inv}):
	There exists an order of $V$ consisting of four parts, 
	$\sigma = [s , H_3, t, H_4]$ (that is, $H_1 = H_2 = \emptyset$), 
	such that the following holds:
	\begin{enumerate}[label=\textbf{I.\arabic*},ref=I.\arabic*]   
		\item\label{app_ss:I1} $\tw{s \arr H_4, H_3 \arr H_4} \le 1$;
		
		\item\label{app_ss:I2} $\tw{E} \le 3$.
	\end{enumerate} 
	
	Now we prove that the described invariants can be maintained.
	If $G$ consists of a single edge, then the base of the induction 
	clearly holds. In order to prove the inductive case, we consider
	the base edge $(s, t)$ of $G$ and choose the unique neighbor of $s$ and $t$, 
	denoted $x \in V$. Since $G$ is a single-source oDAG, there are
	two ways the edges between $x$ and $s, t$ are directed, corresponding to
	operations \ref{op1} and \ref{op2}. Consider both cases.
	
	\paragraph{\bf Case 1.} 
	First assume $(s, x) \in E$ and $(t, x) \in E$. It is easy to see that $G$ is decomposed into two edge-disjoint 
	subgraphs sharing a single vertex $x$; denote the graph containing $(s, x)$ by $G^g$ and the graph containing
	$(t, x)$ by $G^r$; see \cref{fig:ss_odag_case1}. 
	Since $G$ is a single-source oDAG, $G^g$ is also a single-source oDAG with source $s$ and base edge $(s, x)$.
	Similarly, $G^r$ is a single-source oDAG with source $t$ and base edge $(t, x)$.
	By the induction hypothesis, the two graphs admit orders $\sigma^g$ and $\sigma^r$ satisfying the
	described invariant. Next we show how to combine the orders into a single one for $G$. 
	
	Let $\sigma^g = [s, G_3, x, G_4]$ and $\sigma^r = [t, R_3, x, R_4]$. 
	Then we set 
		\[	
		\sigma = [s, t, R_3, G_3, x, G_4, R_4]
		\]	
	and observe that in the order
	$H_3 = \emptyset$, $H_4 = [R_3, G_3, x, G_4, R_4]$; see \cref{fig:ss_odag_case1_ind}.
	
	It is easy to see that $\sigma$ is a linear extension of $V$, that is, $u <_{\sigma} v$ for all edges $(u, v) \in E$.
	Next we verify the conditions of the invariant.
	\begin{enumerate}
		\item[\ref{app_ss:I1}.] 
		Since $H_3 = \emptyset$, we have 
		$\tw{s \arr H_4, H_3 \arr H_4} = \tw{s \arr H_4} \le 1$, where
		the inequality holds because all edges $E(s \arr H_4)$ share a common vertex, $s$.
		
		\item[\ref{app_ss:I2}.] 
		Consider the maximum twist $\kappa$ in $G$ under vertex order $\sigma = [s, H_3, t, H_4]$, and
		suppose for contradiction that $|\kappa| \ge 4$.
		Observe that $H_3 = \emptyset$ in the considered case, and $\kappa$ may contain at most one edge incident to $s$
		and at most one edge incident to $t$. Thus, at least two of the edges of $\kappa$
		are from $E(H_4 \arr H_4)$; see \cref{fig:ss_odag_case1_ind}.
		Denote one of the two edges by $e \in \kappa$.
		
		Since $e \in E(H_4 \arr H_4)$, both endpoints of $e$ are in $R_3 \cup G_3 \cup \{x\} \cup G_4 \cup R_4$.
		Notice that if the two endpoints are in the same part (e.g., $R_i$ or $G_i$ for some $i$), then all edges of
		$\kappa$ have endpoints in that part (since they all cross $e$), and specifically all edges of $\kappa$ are either in 
		$G^g$ or $G^r$, which implies that $|\kappa| \le 3$ by the induction hypothesis. 
		Hence, we assume that $e$ belongs to
		$E(R_3 \arr R_4, R_3 \arr x, x \arr R_4, G_3 \arr G_4, G_3 \arr x, x \arr G_4)$ and that none of the edges of $\kappa$ contains both endpoints in the same part $R_i$ or $G_i$ of $V$.
		
		\begin{itemize}   
			\item If $e \in E(G_3 \arr x, x \arr G_4)$, then the only edges potentially crossing $e$ are in
			$E(G_3 \arr G_4, s \arr G_4, s \arr G_3)$, that is, they all belong to $G^g$.
			In that case $|\kappa| \le 3$ by the hypothesis \ref{app_ss:I2} applied to $G^g$, a contradiction.
			Therefore, $\kappa \cap E(G_3 \arr x, x \arr G_4) = \emptyset$.
			
			\item If $e \in E(G_3 \arr G_4)$, then the edges crossing $e$ are
			either incident to $s$, or incident to $x$, or in $E(G_3 \arr G_4)$. Since $\tw{G_3 \arr G_4} \le 1$, 
			we have that $|\kappa| \le 3$.
			Therefore, $\kappa \cap E(G_3 \arr G_4) = \emptyset$.
			
			\item If $e \in E(R_3 \arr x)$, then the edges crossing $e$ are in
			$E(s \arr G_3, t \arr R_3, R_3 \arr R_4)$.
			Observe that each of the three subsets contributes at most one edge to $\kappa$; thus, $|\kappa| \le 3$.
			Therefore, $\kappa \cap E(R_3 \arr x) = \emptyset$.
			
			\item If $e \in E(x \arr R_4)$, then the edges crossing $e$ are in
			$E(s \arr G_4, t \arr R_4, R_3 \arr R_4)$.
			Each of the three subsets contributes at most one edge to $\kappa$; thus, $|\kappa| \le 3$.
			Therefore, $\kappa \cap E(x \arr R_4) = \emptyset$.
			
			\item If $e \in E(R_3 \arr R_4)$, then the edges crossing $e$ are either adjacent
			to $s$ or $t$, or in $E(R_3 \arr R_4)$.
			Since $\tw{R_3 \arr R_4} \le 1$, we have that $|\kappa| \le 3$, contradicting our assumption.
		\end{itemize}   
		
		This completes the proof for invariants of Case 1.
	\end{enumerate}

	\paragraph{\bf Case 2.}
	Now assume $(s, x) \in E$ and $(x, t) \in E$; see \cref{fig:ss_odag_case2}.
	Again, $G$ is decomposed into two edge-disjoint single-source
	subgraphs sharing a single vertex $x$; denote the graph containing $(s, x)$ by $G^g$ and the graph containing
	$(x, t)$ by $G^r$. By the induction hypothesis, the two graphs admit orders $\sigma^g$ and $\sigma^r$ satisfying the
	invariant. Next we show how to combine the orders into a single one for $G$.
	
	Let $\sigma^g = [s, G_3, x, G_4]$ and $\sigma^r = [x, R_3, t, R_4]$. 
	Then we set 
		\[	
		\sigma = [s, G_3, x, G_4, R_3, t, R_4]
		\]	
	and observe that in the order
	$H_3 = [G_3, x, G_4, R_3]$, $H_4 = R_4$; see \cref{fig:ss_odag_case2_ind}.	
	As in Case 1, $\sigma$ is a linear extension of $V$, and we verify the conditions of the invariant.
	\begin{enumerate}
		\item[\ref{app_ss:I1}.] 
		Observe that $E(s \arr H_4) = \emptyset$ and 
		$E(H_3 \arr H_4) = E(x \arr R_4, R_3 \arr R_4)$.
		Thus, $\tw{s \arr H_4, H_3 \arr H_4} = \tw{x \arr R_4, R_3 \arr R_4} \le 1$
		where the inequality follows from the induction hypothesis \ref{app_ss:I1} applied to $G^r$.   
		
		\item[\ref{app_ss:I2}.] 
		Consider the maximum twist $\kappa$ in $G$ under vertex order $\sigma = [s, H_3, t, H_4]$,
		and suppose for contradiction that $|\kappa| \ge 4$.
		First we rule out the case when $\kappa$ does not contain an edge from $E(H_3 \arr H_3, H_4 \arr H_4)$
		(that is, when all edges of $\kappa$ are either in $E(H_3 \arr H_4)$, or adjacent to $s$ or $t$).
		In that case, $\kappa$ contains at most one edge adjacent to $s$ and at most one edge adjacent to $t$, and all the remaining 
		edges are from $E(H_3 \arr H_4)$. By \ref{app_ss:I1} it holds that $\tw{H_3 \arr H_4} \le 1$, which
		implies that $|\kappa| \le 3$.
		
		Therefore, we may assume that at least one edge $e \in \kappa$ is 
		from $E(H_3 \arr H_3, H_4 \arr H_4)$.
		If $e \in E(H_4 \arr H_4)$, then all edges of $\kappa$ are incident to a vertex in $R_4$ 
		(since only such edges cross $e$); in particular all edges of $\kappa$ are in $G^r$.
		This is impossible by the induction hypothesis \ref{app_ss:I2} applied to $G^r$.
		Thus we may assume that $e$ belongs to $E(H_3 \arr H_3)$.
		
		Since $e \in E(H_3 \arr H_3)$, both endpoints of $e$ are in $G_3 \cup \{x\} \cup G_4 \cup R_3$.
		Notice that if the two endpoints are in the same part (e.g., $G_3$, $G_4$, or $R_3$), then all edges of
		$\kappa$ have endpoints in that part (since they all cross $e$), and specifically all edges of $\kappa$ are either in 
		$G^g$ or $G^r$, which implies that $|\kappa| \le 3$ by the induction hypothesis. 
		Hence, we assume that $e$ belongs to
		$E(G_3 \arr G_4, G_3 \arr x, x \arr G_4, x \arr R_3)$ and that none 
		of the edges of $\kappa$ are in the same part $R_i$ or $G_i$ of $V$.
		
		\begin{itemize}   
			\item If $e \in E(G_3 \arr x, x \arr G_4)$, then the only edges crossing $e$ are in
			$E(s \arr G_3, s \arr G_4, G_3 \arr G_4)$,
			that is, they all belong to $G^g$.
			In that case $|\kappa| \le 3$ by the hypothesis \ref{app_ss:I2} applied to $G^g$. 
			Hence, $\kappa \cap E(G_3 \arr x, x \arr G_4) = \emptyset$.
			
			\item If $e \in E(G_3 \arr G_4)$, then the edges crossing $e$ are either incident to $s$, or
			incident to $x$, or in $E(G_3 \arr G_4)$. However, $\tw{G_3 \arr G_4} \le 1$, which implies that
			$|\kappa| \le 3$.
			Therefore, $\kappa \cap E(G_3 \arr G_4) = \emptyset$.
			
			\item If $e \in E(x \arr R_3)$, then the edges crossing $e$ are in
			$E(s \arr G_4, G_3 \arr G_4, R_3 \arr t, R_3 \arr R_4)$.
			Each of the four subsets contributes at most one edge to $\kappa$, and edges of
			$E(s \arr G_4, G_3 \arr G_4)$ do not cross edges of $E(R_3 \arr t, R_3 \arr R_4)$,
			contradicting our assumption that $|\kappa| \ge 4$.
		\end{itemize}   
	\end{enumerate}
	
	This completes the proof of \cref{thm:ss_odag_twist}.
\end{proof} 

\begin{backInTimeLm}{lm-ss-odag-stack}
	\begin{lemma}
		Every single-source outerplanar DAG admits a $4$-stack layout.
	\end{lemma}	
\end{backInTimeLm}

\begin{proof}
	The proof follows the same recursive approach as the proof of \cref{thm:ss_odag_twist}; in particular, 
	the vertex order is the same. Next we describe relevant differences.
	
	For a given outerplanar DAG, $G=(V, E)$ with a unique source $s \in V$, we maintain the following invariant.
	For a base edge $(s, t)$ of $G$, there exists a layout in $4$ stacks, 
	$\Sh_1, \Sh_2, \Sh_3, \Sh_4$, with a vertex order 
	consisting of four parts, $\sigma = [s, H_3, t, H_4]$, such that the following holds (see \cref{fig:ss_odag_inv_stack}):
		
	\begin{enumerate}[label=\textbf{I.\arabic*},ref=I.\arabic*]   
		\item\label{ss:S0} $E(s \arr H_3, s \arr H_4) \subseteq \Sh_1$;
		
		\item\label{ss:S1} $E(H_3 \arr t, t \arr H_4) \subseteq \Sh_2$;
		
		\item\label{ss:S2} $E(H_3 \arr H_4) \subseteq \Sh_3$.
	\end{enumerate}
	
	\begin{figure}[!ht]
		\center
		\includegraphics[page=6]{pics/ss_odag}
		\caption{The invariant used in the proof of \cref{thm:ss_odag_stack}}
		\label{fig:ss_odag_inv_stack}
	\end{figure}
	
	Now we show that the invariants can be maintained in the two cases.
	
	\paragraph{Case 1.} 
	Assume $(s, x) \in E$ and $(t, x) \in E$, and that the edges of the two subgraphs are assigned to stacks recursively; 
	see \cref{fig:ss_odag_case1_ind_stack}.
	By renaming stacks, we may assume that
	\begin{itemize}
		\item $E(s \arr G_3, s \arr G_4) \subseteq \Sh_1$ ({\color{red}red}), 
		\item $E(G_3 \arr x, x \arr G_4) \subseteq \Sh_2$ ({\color{myblue}blue}), 
		\item $E(G_3 \arr G_4) \subseteq \Sh_3$ ({\color{green}green}), 
		\item $E(t \arr R_3, t \arr R_4) \subseteq \Sh_2$,
		\item $E(R_3 \arr x, x \arr R_4) \subseteq \Sh_4$ ({\color{mypurple}purple}), 
		\item $E(R_3 \arr R_4) \subseteq \Sh_3$. 
	\end{itemize}
	
	For the remaining three edges we use the following stack assignment:
	$(s, x) \in \Sh_1$, $(t, x) \in \Sh_2$, $(s, t) \in \Sh_2$. One can verify that edges in the same stack
	do not cross each other, and that the invariants, \ref{ss:S0}, \ref{ss:S1}, \ref{ss:S2}, are maintained.
	
	\begin{figure}[!ht]
		\begin{subfigure}[b]{\linewidth}
			\center
			\includegraphics[page=7,height=100pt]{pics/ss_odag}
			\caption{Case~1}
			\label{fig:ss_odag_case1_ind_stack}
		\end{subfigure}   
		\begin{subfigure}[b]{\linewidth}
			\center
			\includegraphics[page=8,height=100pt]{pics/ss_odag}
			\caption{Case~2}
			\label{fig:ss_odag_case2_ind_stack}
		\end{subfigure}   
		\caption{Stack assignment in the proof of \cref{thm:ss_odag_stack}}
	\end{figure}  
	
	\paragraph{Case 2.} 
	Assume $(s, x) \in E$ and $(x, t) \in E$, and that the edges of the two subgraphs are assigned to stacks recursively; 
	see \cref{fig:ss_odag_case2_ind_stack}.
	By renaming stacks, we may assume that
	\begin{itemize}
		\item $E(s \arr G_3, s \arr G_4) \subseteq \Sh_1$ ({\color{red}red}), 
		\item $E(G_3 \arr x, x \arr G_4) \subseteq \Sh_2$ ({\color{myblue}blue}), 
		\item $E(G_3 \arr G_4) \subseteq \Sh_3$ ({\color{green}green}), 
		\item $E(x \arr R_3, x \arr R_4) \subseteq \Sh_4$ ({\color{mypurple}purple}), 
		\item $E(R_3 \arr t, t \arr R_4) \subseteq \Sh_2$,
		\item $E(R_3 \arr R_4) \subseteq \Sh_3$. 
	\end{itemize}
	
	For the remaining three edges we use the following stack assignment:
	$(s, x) \in \Sh_1$, $(x, t) \in \Sh_2$, $(s, t) \in \Sh_2$. One can verify that edges in the same stack
	do not cross each other, and that invariants \ref{ss:S0} and \ref{ss:S1} are maintained.
	In order to show that \ref{ss:S2} is maintained, we make an observation that follows
	directly from the recursive construction:
	\begin{obs}
		For the order $\sigma = [s, H_3, t, H_4]$, we have either
		$E(s \arr H_4) = \emptyset$ or $H_3 = \emptyset$.
	\end{obs} 
	Applying the observation for graph $G^r$, we get that either $E(x \arr R_4) = \emptyset$ or 
	$E(R_3 \arr R_4) = \emptyset$. In both cases we have that edges $E(H_3 \arr H_4)$ are in one stack,
	$\Sh_3$ or $\Sh_4$, implying that \ref{ss:S2} holds.	
	This completes the proof of \cref{thm:ss_odag_stack}.
\end{proof}

\subsection{Monotone oDAGs}
\label{sect:mon_odag_app}

Here we provide a complete proof of \cref{thm:odag_twist}.

\begin{backInTimeLm}{lm-odag-twist}
	\begin{lemma}
		Every monotone outerplanar DAG admits an order whose twist size is at most $4$.
	\end{lemma}
\end{backInTimeLm}

\begin{proof}
	We prove the claim by induction on the size of the given oDAG, $G=(V, E)$, by using
	the following invariants (see \cref{fig:odag_inv}):
	For a base edge $(s, t) \in E$, there exists an order $\sigma$ of $V$ 
	consisting of six parts, $\sigma = [H_1$, $s$, $H_2$, $H_3$, $t$, $H_4]$, 
	such that the following holds:
	\begin{enumerate}[label=\textbf{I.\arabic*},ref=I.\arabic*]   
		\item\label{app_I0} $E(H_1 \arr H_3) = E(H_2 \arr H_3) = E(H_2 \arr H_4) = \emptyset$;
		
		\item\label{app_I1} $\tw{H_1 \cup \{s\} \arr \{t\} \cup H_4} \le 1$;
		
		\item\label{app_I2} $\tw{H_1 \arr H_2 \cup \{t\} \cup H_4} \le 2$;
		
		\item\label{app_I3} $\tw{H_1 \cup \{s\} \cup H_3 \arr H_4} \le 2$.
		
		\item\label{app_I4} $\tw{E} \le 4$;
	\end{enumerate} 
	
	Now we prove that the described invariants can be maintained.
	If $G$ consists of a single edge, then the base of the induction clearly holds. For the inductive case, 
	we consider the base edge $(s, t)$ of $G$ and choose the unique common neighbor of $s$ and $t$, 
	denoted $x \in V$. Since $G$ is monotone and acyclic, there are
	two ways the edges between $x$ and $s, t$ are directed: either 
	$(s, x) \in E, (t, x) \in E$ (operation \ref{op2}) or $(x, s) \in E, (x, t) \in E$ (operation \ref{op3}).
	Observe that, since a (monotone) outerplanar DAG remains (monotone) outerplanar after reversing all 
	edge directions and the described invariants are symmetric with respect to parts $H_1, H_2$ and parts $H_3, H_4$, 
	it is sufficient to study only one of the two cases. Therefore in what follows we investigate
	the former case, while the latter case follows from the symmetry.
	
	Assume $(s, x) \in E$ and $(t, x) \in E$. It is easy to see that $G$ is decomposed into two edge-disjoint monotone
	oDAGs sharing a vertex $x \in V$; denote the graph containing $(s, x)$ by $G^g$ and the graph containing
	$(t, x)$ by $G^r$. By the induction hypothesis, the two graphs admit orders $\sigma^g$ and $\sigma^r$ satisfying the
	described invariant. Next we show how to combine the orders into a single one for $G$.
	
	Let $\sigma^g = [G_1, s, G_2, G_3, x, G_4]$ and $\sigma^r = [R_1, t, R_2, R_3, x, R_4]$. 
	Then we set 
	\[\sigma = [G_1, s, G_2, R_1, t, R_2, G_3, R_3, x, R_4, G_4]\] and note that 
	$H_1 = G_1$, $H_2 = G_2$, $H_3 = R_1$, $H_4 = [R_2, G_3, R_3, x, R_4, G_4]$ in~$\sigma$; see \cref{fig:odag_case1}.
	It is easy to see that $\sigma$ is a linear extension of $V$, and
	we verify the conditions of the invariant.
	
	\begin{enumerate}
		\item[\ref{app_I0}.] The condition follows directly from the construction; see \cref{fig:odag_case1}.
		
		\item[\ref{app_I1}.] 
		Consider $\tw{H_1 \arr H_4, s \arr H_4, H_1 \arr t, s \arr t}$.
		Observe that $E(H_1 \arr t) = E(G_1 \arr t) = \emptyset$, 
		$E(H_1 \arr H_4) = E(G_1 \arr G_4, G_1 \arr x)$, and 
		$E(s \arr H_4) = E(s \arr x, s \arr G_4)$.
		Hence, 
		$\tw{H_1 \arr H_4, s \arr H_4, H_1 \arr t, s \arr t} = 
		\tw{G_1 \arr G_4, s \arr G_4, G_1 \arr x, s \arr x, s \arr t} \le 1,$
		where the inequality follows from the induction hypothesis \ref{app_I1} applied to $G^g$ and the fact
		that edge $(s, t)$ does not cross any other edge of the edge set.
		
		\item[\ref{app_I2}.] 
		Consider $\tw{H_1 \arr H_2, H_1 \arr H_4, H_1 \arr t}$.
		Here $E(H_1 \arr t) = \emptyset$, $E(H_1 \arr H_2) = E(G_1 \arr G_2)$, and
		$E(H_1 \arr H_4) = E(G_1 \arr G_4, G_1 \arr x)$. Therefore,
		\[\tw{H_1 \arr H_2, H_1 \arr H_4, H_1 \arr t} = 
		\tw{G_1 \arr G_2, G_1 \arr G_4, G_1 \arr x} \le 2,
		\]
		where the inequality follows from the hypothesis \ref{app_I2} applied to $G^g$.
		
		\item[\ref{app_I3}.]
		Consider $\tw{H_1 \arr H_4, H_3 \arr H_4, s \arr H_4}$, and let
		$\kappa$ be the maximum twist formed by the edges. Next we argue that $|\kappa| \le 2$.
		
		Observe that $\kappa$ may contain edges from seven subsets: 
		$E(G_1 \arr G_4)$, $E(G_1 \arr x)$, $E(s \arr G_4)$, $E(s \arr x)$, 
		$E(R_1 \arr R_4)$, $E(R_1 \arr x)$, and $E(R_1 \arr R_2)$.
		First consider the case when $\kappa$ contains an edge of $E(R_1 \arr R_2)$. Such an edge
		can only cross edges from $E(R_1 \arr R_4, R_1 \arr x)$. However we have that
		$\tw{R_1 \arr R_2, R_1 \arr R_4, R_1 \arr x} \le 2$ by \ref{app_I2} applied to $G^r$, which implies
		that $|\kappa| \le 2$ in the case. Thus, we may assume that $\kappa$ contains no edge of $E(R_1 \arr R_2)$.
		
		The remaining six edge sets are partitioned into $E(G_1 \arr G_4, G_1 \arr x, s \arr G_4, s \arr x)$ 
		and $E(R_1 \arr R_4, R_1 \arr x)$. The size of the maximum twist in each of the two subsets is
		at most one by \ref{app_I1} applied to $G^g$ and for $G^r$, respectively. Therefore, $|\kappa| \le 2$ as claimed.
		
		\item[\ref{app_I4}.]
		Consider the maximum twist $\kappa$ in $G$ under vertex order $\sigma = [H_1, s, H_2, H_3, t, H_4]$.
		First we rule out the case when $\kappa$ does not contain an edge from $E(H_i \arr H_i)$ for some $1 \le i \le 4$ 
		(that is, when all edges of $\kappa$ are either in $E(H_i \arr H_j)$ for some $i \neq j$, or adjacent to $s$ or $t$).
		In that case, $\kappa$ contains at most one edge adjacent to $s$ and at most one edge adjacent to $t$. The remaining edges
		are from $E(H_1 \arr H_2, H_3 \arr H_4, H_1 \arr H_4)$.
		Observe that on one hand, an edge from $E(H_1 \arr H_2)$ cannot form a twist with an edge from $E(H_3 \arr H_4)$.
		On the other hand, we showed that $\tw{H_1 \arr H_2, H_1 \arr H_4} \le 2$ and 
		$\tw{H_3 \arr H_4, H_1 \arr H_4} \le 2$, which implies that $|\kappa| \le 4$.
		
		Therefore, we may assume that at least one edge of $\kappa$, denoted $e \in \kappa$, is 
		from $E(H_i \arr H_i)$ for some $1 \le i \le 4$.
		If $i = 1$, $i = 2$, or $i = 3$, then all edges of $\kappa$ are adjacent to a vertex from $G_1$, $G_2$, or $R_1$, respectively;
		see \cref{fig:odag_case1}. This is impossible by the induction hypothesis \ref{app_I4} applied to $G^g$ and $G^r$.
		Thus we assume that $e$ belongs to $E(H_4 \arr H_4)$ and that all other edges of $\kappa$ have an endpoint in $H_4$.
		
		Since $e \in E(H_4 \arr H_4)$, both endpoints of $e$ are in $R_2 \cup G_3 \cup R_3 \cup \{x\} \cup R_4 \cup G_4$.
		Notice that if the two endpoints are both in the same part (e.g., $R_i$ or $G_i$ for some $i$), then all edges of
		$\kappa$ have endpoints in that part (since they all cross $e$), and specifically all edges of $\kappa$ are either in 
		$G^g$ or $G^r$, which implies $|\kappa| \le 4$ by the induction hypothesis \ref{app_I4}. Hence, we assume that $e$ belongs to
		$E(G_3 \arr G_4, G_3 \arr x, x \arr G_4, R_3 \arr R_4, R_3 \arr x, x \arr R_4)$ and that none of the edges of $\kappa$ are in the same part $G_i$ or $R_i$ of $V$.
		
		\begin{itemize}   
			\item If $e \in E(R_3 \arr R_4, R_3 \arr x, x \arr R_4)$, then all the edges of $\kappa$ crossing $e$ are
			either adjacent to $x$ or belong to $E(R_1 \arr R_4, R_3 \arr R_4, t \arr R_4)$. 
			Since $\tw{R_1 \arr R_4, R_3 \arr R_4, t \arr R_4} \le 2$ by \ref{app_I3} applied to $G^r$ and
			there is at most one edge in $\kappa$ adjacent to $x$, 
			we have that $|\kappa| \le 3$ in this case.
			Hence, we may assume that $\kappa \cap E(R_3 \arr R_4, R_3 \arr x, x \arr R_4) = \emptyset$.
			
			\item If $e \in E(G_3 \arr G_4, G_3 \arr x, x \arr G_4)$, then all the edges of $\kappa$ crossing $e$ are
			either adjacent to $x$ or belong to 
			$E(G_1 \arr G_4, G_3 \arr G_4, s \arr G_4) \cup E(R_1 \arr R_4, t \arr R_4)$.
			The bound $|\kappa| \le 4$ follows from observations that
			$\tw{G_1 \arr G_4, G_3 \arr G_4, s \arr G_4} \le 2$ (by \ref{app_I3} applied to $G^g$), 
			$\tw{R_1 \arr R_4, t \arr R_4} \le 1$ (by \ref{app_I1} applied to $G^r$), and
			that there is at most one edge in $\kappa$ adjacent to $x$.			
		\end{itemize}
	\end{enumerate} 
	
	This completes the proof of \cref{thm:odag_twist}.
\end{proof} 

\subsection{Outerpath DAGs}
\label{sect:outerpath_dag_app}

Here we provide a complete proof of \cref{thm:outerpath_twist}.

\begin{backInTimeLm}{lm-outerpath-twist}
	\begin{lemma}
		Every outerpath DAG admits an order whose twist size is at most~$4$.
	\end{lemma}
\end{backInTimeLm}

\begin{proof}
	We prove the claim by induction on the size of the given outerpath DAG, $G=(V, E)$, by using
	the following invariants:
	For a base edge $(s, t) \in E$, there exists a vertex order
	consisting of six parts, $\sigma = [H_1$, $s$, $H_2$, $H_3$, $t$, $H_4]$, 
	such that the following holds:
	
	\begin{enumerate}[label=\textbf{I.\arabic*},ref=I.\arabic*]   
		\item\label{app_p:I0} $H_2 = \emptyset$ or $H_3 = \emptyset$, that is, $E(H_2 \arr H_3) = \emptyset$;
		
		\item\label{app_p:I2a} $\tw{H_2 \arr t, H_2 \arr H_4} \le 1$;
		
		\item\label{app_p:I2b} $\tw{H_1 \arr H_3, s \arr H_3} \le 1$;
		
		\item\label{app_p:I3} $\tw{H_1 \cup \{s\} \cup H_2 \arr H_3 \cup \{t\} \cup H_4} \le 2$;
		
		\item\label{app_p:I4a} $\tw{H_1 \arr H_2, H_2 \arr t, H_2 \arr H_4} \le 3$;
		
		\item\label{app_p:I4b} $\tw{H_1 \arr H_3, s \arr H_3, H_3 \arr H_4} \le 3$;
		
		\item\label{app_p:I5} $\tw{H_1 \arr H_4, s \arr H_4, H_2 \arr H_4, H_3 \arr H_4} \le 3$;
		
		\item\label{app_p:I6} $\tw{H_1 \arr H_2, H_1 \arr H_3, H_1 \arr t, H_1 \arr H_4} \le 3$;
		
		\item\label{app_p:I7} $\tw{E} \le 4$.
	\end{enumerate}

	Now we prove that the described invariants can be maintained.
	If $G$ consists of a single edge, then the base of the induction clearly holds.
	In order to prove the inductive case, we consider
	a base edge $(s, t) \in E$ and choose the unique common neighbor of $s$ and $t$, 
	denoted $x \in V$. There are three operations that can be applied on $(s, t)$: \ref{op1}, \ref{op2}, and \ref{op3}.
	By symmetry, it is sufficient to study only \ref{op1} and \ref{op2}. Depending on which edges of face
	$\langle s, t, x \rangle$ are utilized for the construction, we distinguish four cases; see \cref{fig:outerpath_cases}.
	As in earlier proofs we denote the graphs constructed on $(s, x)$ by $G^g$ and the graph on $(t, x)$ by $G^r$, and
	assume the two graphs admit orders $\sigma^g$ and $\sigma^r$ satisfying the invariants.
	Notice however that, since $G$ is an outerpath, only one of $G^g, G^r$ contains more than two vertices.	
		
	\paragraph{\bf Case 1a.} 	
	Assume that $(s, x) \in E$, $(t, x) \in E$, $\sigma^g = [G_1, s, G_2, G_3, x, G_4]$, and $\sigma^r = [t, x]$.
	We set 
	\[\sigma = [G_1, s, G_2, t, G_3, x, G_4],\]  
	where $H_1 = G_1$, $H_2 = G_2$, $H_3 = \emptyset$, $H_4 = [G_3, x, G_4]$ in $\sigma$; see \cref{fig:outerpath_1a}.
	Next we verify the conditions of the invariants.	
	
	\begin{enumerate}
		\item[\ref{app_p:I0}.] Follows directly from the construction.
		
		\item[\ref{app_p:I2a}.] Since $E(H_2 \arr t) = \emptyset$, the claim follows from \ref{app_p:I0} and \ref{app_p:I2a} applied to $G^g$.
		
		\item[\ref{app_p:I2b}.] The claim holds since $H_3 = \emptyset$.
		
		\item[\ref{app_p:I3}.] Consider the maximum twist, $\kappa$, among the specified edges. 
		If $(s, t) \in \kappa$, then the only edges crossing $(s, t)$ are in $E(G_2 \arr G_4, G_2 \arr x)$. 
		By \ref{app_p:I2a} applied to $G^g$, we get the claim.		
		If $(s, t) \notin \kappa$, then all the edges are from the hypothesis \ref{app_p:I3} for $G^g$.
		
		\item[\ref{app_p:I4a}.] Since $E(H_1 \arr H_2, H_2 \arr H_4, H_2 \arr t) = E(G_1 \arr G_2, G_2 \arr G_4, G_2 \arr x)$, 
		the claim follows from the hypothesis \ref{app_p:I4a} applied to $G^g$.
		
		\item[\ref{app_p:I4b}.] The claim holds since $H_3 = \emptyset$.
		
		\item[\ref{app_p:I5}.] The claim follows from \ref{app_p:I3} applied to $G^g$.
		
		\item[\ref{app_p:I6}.] Since $E(H_1 \arr H_2, H_1 \arr H_3, H_1 \arr t, H_1 \arr H_4) = 
		E(G_1 \arr G_2, G_1 \arr G_3, G_1 \arr x, G_1 \arr G_4)$, 
		the claim follows from the hypothesis \ref{app_p:I6} applied to $G^g$.
		
		\item[\ref{app_p:I7}.] Consider the maximum twist, $\kappa$, formed by $E$ under $\sigma$. 
		We may assume that $\kappa$ contains $(s, t)$ or $(t, x)$, as otherwise all edges of $\kappa$ are from $G^g$;
		since the two options are symmetric, we assume $(s, t) \in \kappa$. 
		The only edges crossing $(s, t)$ are in $E(G_1 \arr G_2, G_2 \arr G_4, G_2 \arr x)$.
		By \ref{app_p:I4a}, we have $\tw{G_1 \arr G_2, G_2 \arr G_4, G_2 \arr x} \le 3$, which implies that 
		$|\kappa| \le 4$.
	\end{enumerate}

	\paragraph{\bf Case 1b.} 	
	Assume $(s, x) \in E$, $(t, x) \in E$, $\sigma^g = [s, x]$, and $\sigma^r = [R_1, t, R_2, R_3, x, R_4]$.
	We set 
	\[\sigma = [s, R_1, t, R_2, R_3, x, R_4],\] where
	$H_1 = H_2 = \emptyset$, $H_3 = R_1$, $H_4 = [R_2, R_3, x, R_4]$ in $\sigma$; see \cref{fig:outerpath_1b}.
	Next we verify the conditions of the invariant.	
	
	\begin{enumerate}
		\item[\ref{app_p:I0}.] Follows directly from the construction.
		
		\item[\ref{app_p:I2a}.] The claim holds since $H_2 = \emptyset$.
		
		\item[\ref{app_p:I2b}.] The claim holds since $E(H_1 \arr H_3, s \arr H_3) = \emptyset$.
		
		\item[\ref{app_p:I3}.] The claim holds since the relevant set of edges consists of $(s, t)$ and $(s, x)$.
		
		\item[\ref{app_p:I4a}.] The claim holds since $H_2 = \emptyset$.
		
		\item[\ref{app_p:I4b}.] As $E(H_1 \arr H_3, H_3 \arr H_4, s \arr H_3) = 
		E(R_1 \arr R_2, R_1 \arr R_3, R_1 \arr x, R_1 \arr R_4)$, 
		the claim follows from the hypothesis \ref{app_p:I6} applied to $G^r$.
		
		\item[\ref{app_p:I5}.] Observe that $E(H_1 \arr H_4, s \arr H_4, H_2 \arr H_4, H_3 \arr H_4) = 
		E(s \arr x, R_1 \arr R_2, R_1 \arr R_3, R_1 \arr x, R_1 \arr R_4)$. 
		If $(s, x)$ is in the maximum twist, then it can cross only $E(R_1 \arr R_4)$; by \ref{app_p:I3} applied to $G^r$, 
		we get the desired bound. 
		Otherwise, if $(s, x)$ is not in the maximum twist, then the claim follows from \ref{app_p:I6} applied to $G^r$.
		
		\item[\ref{app_p:I6}.] The claim holds since $H_1 = \emptyset$.
		
		\item[\ref{app_p:I7}.] Consider the maximum twist, $\kappa$, formed by $E$ under $\sigma$. 
		We may assume that $\kappa$ contains $(s, t)$ or $(s, x)$, as otherwise all edges of $\kappa$ are from $G^r$.
		
		If $(s, x) \in \kappa$, then the edges crossing $(s, x)$ are in 
		$E(R_1 \arr R_4, t \arr R_4, R_2 \arr R_4, R_3 \arr R_4)$. By \ref{app_p:I5} we get the bound.
		
		If $(s, t) \in \kappa$, then the edges crossing $(s, x)$ are in 
		$E(R_1 \arr R_4, R_1 \arr x, R_1 \arr R_2, R_1 \arr R_3)$. By \ref{app_p:I6} we get the bound.
	\end{enumerate}
	
	\paragraph{\bf Case 2a.} 
	Assume that $(s, x) \in E$, $(x, t) \in E$, $\sigma^g = [G_1, s, G_2, G_3, x, G_4]$, and $\sigma^r = [x, t]$.
	We set 
	\[\sigma = [G_1, s, G_2, G_3, x, G_4, t],\] where
	$H_1 = G_1$, $H_2 = [G_2, G_3, x, G_4]$, $H_3 = H_4 = \emptyset$ in $\sigma$; see \cref{fig:outerpath_2a}.	
	Next we verify the conditions of the invariant.	
	
	\begin{enumerate}
		\item[\ref{app_p:I0}.] Follows directly from the construction.
		
		\item[\ref{app_p:I2a}.] The claim holds since $H_4 = \emptyset$ and $E(H_2 \arr t) = E(x \arr t)$.
		
		\item[\ref{app_p:I2b}.] The claim holds since $H_3 = \emptyset$.
		
		\item[\ref{app_p:I3}.] The claim holds since the relevant set of edges consists of $(s, t)$ and $(x, t)$.
		
		\item[\ref{app_p:I4a}.] 
		Observe that $E(H_1 \arr H_2, H_2 \arr H_4, H_2 \arr t) = 
		E(G_1 \arr G_2, G_1 \arr G_3, G_1 \arr x, G_1 \arr G_4, x \arr t)$. 
		If $(x, t)$ is in the maximum twist, then it can cross only $E(G_1 \arr G_4)$; by \ref{app_p:I3} applied to $G^g$, 
		we get the bound. 
		Otherwise, if $(x, t)$ is not in the maximum twist, then the claim follows from \ref{app_p:I6} applied to $G^g$.
		
		\item[\ref{app_p:I4b}.] The claim holds since $H_3 = \emptyset$.
		
		\item[\ref{app_p:I5}.] The claim holds since $H_4 = \emptyset$.
		
		\item[\ref{app_p:I6}.]
		As $E(H_1 \arr H_2, H_1 \arr H_3, H_1 \arr t, H_1 \arr H_4) = 
		E(G_1 \arr G_2, G_1 \arr G_3, G_1 \arr x, G_1 \arr G_4)$, 
		the claim follows from the hypothesis \ref{app_p:I6} applied to $G^g$.
		
		\item[\ref{app_p:I7}.] Reduced to \ref{app_p:I7} in Case~1b by relabeling vertices and reversing edge directions.
	\end{enumerate}
	
	\paragraph{\bf Case 2b.} 
	Assume $(s, x) \in E$, $(x, t) \in E$, $\sigma^g = [s, x]$, and $\sigma^r = [R_1, x, R_2, R_3, t, R_4]$.
	The case is reduced to \emph{Case~2a} by reversing all edge directions; see \cref{fig:outerpath_2b}.
	
	This completes the proof of \cref{thm:outerpath_twist}.
\end{proof} 

\section{Complete Proofs for Section~4}
\label{sect:up3t_app}

Here we provide a complete proof of \cref{thm:up3t}.

\begin{backInTimeThm}{thm-up3t}
	\begin{theorem}
		Every upward planar $3$-tree admits an order whose twist size is at most $5$.
	\end{theorem} 
\end{backInTimeThm}

\begin{proof}
	We prove the claim by induction on the size of a given upward planar $3$-tree, $G=(V, E)$, by using
	the following invariants (see \cref{fig:up3t_inv}):
	For the outerface $\langle s, m, t \rangle$ of $G$, there exists an order of $V$ 
	such that it consists of five parts, $\sigma = [s, H_1, m, H_2, t]$, where $H_1, H_2 \subset V$, 
	and the following holds:
	\begin{enumerate}[label=\textbf{I.\arabic*},ref=I.\arabic*]
		\item\label{app_up3t:I1} $\tw{H_1 \arr H_2, s \arr H_2, H_1 \arr t, s \arr t} \le 2$;
		
		\item\label{app_up3t:I2} $\tw{E} \le 5$.
	\end{enumerate} 
	
	Now we prove that the described invariants can be maintained.
	If $G$ is a triangle, then the base of the induction clearly holds. In order 
	to prove the inductive case, we consider the outerface, $\langle s, m, t \rangle$, of $G$
	and identify the unique vertex, $x \in V$, adjacent to $s, m, t$. Since $G$ is upward planar, 
	we have $(s, x) \in E$, $(x, t) \in E$; for the direction of the edge between $x$ and $m$, there are
	two possible options. Observe that as in the proof of \cref{thm:odag_twist}, we can reduce
	one option to another one by reversing the directions of all edges of $G$, which preserves upward
	planarity of the graph. Therefore, it is sufficient to study only one of the options.
	
	\begin{figure}[!ht]
		\center
		\includegraphics[page=3,height=180pt]{pics/up3t}
		\caption{An inductive step in the proof of \cref{thm:up3t}}
		\label{fig:up3t_case1}
	\end{figure}

	Assume $(x, m) \in E$. It is easy to see that $G$ is decomposed into three upward planar subgraphs
	bounded by faces $\langle s, x, t \rangle$, $\langle s, x, m \rangle$, and $\langle x, m, t \rangle$;
	denote the graphs by $G^{r}$, $G^{g}$, and $G^{b}$, respectively; see \cref{fig:up3t_dec}. 
	By the induction hypothesis, the three graphs admit orders $\sigma^{r}, \sigma^{g}, \sigma^{b}$ satisfying the
	described invariants. Next we show how to combine the orders into a single one for $G$.
	
	Let $\sigma^{r} = [s, R_1, x, R_2, t]$, $\sigma^{g} = [s, G_1, x, G_2, m]$, and $\sigma^{b} = [x, B_1, m, B_2, t]$. 
	Then we set 
		\[
		\sigma = [s, R_1, G_1, x, G_2, R_2, B_1, m, B_2, t]
		\] 
	where
	$H_1 = [R_1, G_1, x, G_2, R_2, B_1]$ and $H_2 = B_2$; see \cref{fig:up3t_case1}.
	It is easy to see that $\sigma$ is a linear extension of $V$, and
	we verify the invariant.
	
	\begin{enumerate}
		\item[\ref{app_up3t:I1}]
		Consider edges $E(H_1 \arr H_2, s \arr H_2, H_1 \arr t, s \arr t)$
		and denote the maximum twist formed by these edges by $\kappa$; see \cref{fig:up3t_case1_inv}.
		We need to show that $|\kappa| \le 2$.
		Observe that in this case, $E(s \arr H_2) = \emptyset$, while
		$\tw{\{x\} \cup B_1 \arr B_2 \cup \{t\}} \le 2$ by \ref{app_up3t:I1} applied to $G^b$.
		
		\begin{figure}[!ht]
			\center
			\includegraphics[page=4,height=120pt]{pics/up3t}
			\caption{Maintaining invariant \ref{app_up3t:I1} in the proof of \cref{thm:up3t}}
			\label{fig:up3t_case1_inv}
		\end{figure}
		
		Consider edges of $\kappa$ that are adjacent to $t$; clearly, there is at most one such edge. 
		\begin{itemize}
			\item If none of the edges from $E(v \arr t), v \in V$ is in $\kappa$, then $\kappa$ is formed by
			$E(B_1 \arr B_2, x \arr B_2)$; by induction hypothesis \ref{app_up3t:I1} applied to $G^b$, $|\kappa| \le 2$.
			
			\item If an edge, $e \in E(B_1 \arr t)$ is in $\kappa$, then the edges crossing $e$ are 
			from $E(B_1 \arr B_2, x \arr B_2)$. By induction hypothesis applied to $G^b$, $|\kappa| \le 2$.
			
			\item Finally, if an edge $e \in E(s \arr t, R_1 \arr t, x \arr t,  R_2 \arr t)$ 
			is in $\kappa$, then all 
			the edges of $\kappa$ crossing $e$ are from $E(x \arr B_2)$, that is, they are adjacent to $x$. 
			Since only one edge adjacent to a vertex
			can be in a twist, we have $|\kappa| \le 2$.
		\end{itemize}
		
		\item[\ref{app_up3t:I2}]
		Consider the maximum twist $\kappa$ in $G$ under vertex order $\sigma = [s, H_1, m, H_2, t]$.   
		First we rule out the case when $\kappa$ does not contain an edge from $E(H_1 \arr H_1, H_2 \arr H_2)$.
		In that case, $\kappa$ contains at most three edges adjacent to $s, m, t$.
		By \ref{app_up3t:I1} we have $\tw{H_1 \arr H_2} \le 2$, which implies that $|\kappa| \le 5$ in the considered case.
		
		Therefore, we may assume that at least one edge of $\kappa$, denoted $e \in \kappa$, is 
		from $E(H_1 \arr H_1, H_2 \arr H_2)$.
		If $e \in E(H_2 \arr H_2)$, then all edges of $\kappa$ are adjacent to a vertex in $H_2$; by the induction hypothesis
		\ref{app_up3t:I2} applied to $G^b$, we have $|\kappa| \le 5$.
		Thus we may assume that $e$ belongs to $E(H_1 \arr H_1)$ and that all other edges of $\kappa$ have an endpoint in $H_1$, as they cross $e$.
		
		Since $e \in E(H_1 \arr H_1)$, both endpoints of $e$ are in $R_1 \cup G_1 \cup \{x\} \cup G_2 \cup R_2 \cup B_1$.   
		Notice that if the two endpoints are both in the same part (e.g., $R_i$, $G_i$, or $B_i$ for some $i$), then all edges of
		$\kappa$ have endpoints in that part (since they all cross $e$), and specifically all edges of $\kappa$ are either in 
		$G^g$ or $G^r$ or $G^b$, which implies that $|\kappa| \le 5$. Hence, we may assume that $e$ belongs to
		$E(R_1 \arr R_2, R_1 \arr x, x \arr R_2, G_1 \arr G_2, G_1 \arr x, x \arr G_2, x \arr B_1)$ and that none of the 
		edges of $\kappa$ are in the same part $G_i$, $R_i$, or $B_i$ of $V$.
		
		\begin{itemize}   
			\item If $e \in E(G_1 \arr G_2, G_1 \arr x, x \arr G_2)$, then all the edges of $\kappa$ crossing $e$ are either
			adjacent to $s$, $x$, $t$, or belong to $E(G_1 \arr G_2)$. By \ref{app_up3t:I1} applied to $G^g$, we have 
			$\tw{G_1 \arr G_2} \le 2$, which implies that $|\kappa| \le 5$.
			Hence, we may assume that $\kappa \cap E(G_1 \arr G_2, G_1 \arr x, x \arr G_2) = \emptyset$ if $\kappa > 5$.
			
			\item Similarly, if $e \in E(R_1 \arr R_2, R_1 \arr x, x \arr R_2)$, then all the edges of $\kappa$ crossing
			$e$ are either adjacent to $s$, $x$, $t$, or belong to $E(R_1 \arr R_2)$.
			By \ref{app_up3t:I1} applied to $G^r$, we have $\tw{R_1 \arr R_2} \le 2$, which implies that $|\kappa| \le 5$.
			Hence, we may assume that $\kappa \cap E(R_1 \arr R_2, R_1 \arr x, x \arr R_2) = \emptyset$.
			
			\item Finally, if $e \in E(x \arr B_1)$, then all the edges of $\kappa$ crossing $e$ are either adjacent
			to $s$, $m$, $t$, or belong to $E(B_1 \arr B_2)$.
			By \ref{app_up3t:I1} applied to $G^b$, we have $\tw{B_1 \arr B_2} \le 2$.
			At the same time, edges of $\kappa$ adjacent to $s$ (that is, $E(s \arr G_2, s \arr R_2)$) do not
			cross edges from $E(B_1 \arr B_2)$. If $\kappa \cap E(B_1 \arr B_2) = \emptyset$, then 
			$\kappa$ contains at most four edges (adjacent to $s$, $m$, $t$, and $x$). Otherwise, if 
			$\kappa \cap E(B_1 \arr B_2) \neq \emptyset$, $\kappa$ contains
			one edge adjacent to $x$, at most one edge adjacent to $t$, at most one edge adjacent to $m$, 
			and at most two edges from $E(B_1 \arr B_2)$. Therefore, $|\kappa| \le 5$.
		\end{itemize}
	\end{enumerate} 
	
	This completes the proof of \cref{thm:up3t}.  
\end{proof} 
	
\end{document}